\crefname{thm}{Theorem}{Theorems}
\crefname{pro}{Proposition}{Propositions}
\crefname{lem}{Lemma}{Lemmas}
\crefname{rmk}{Remark}{Remarks}
\crefname{cor}{Corollary}{Corollaries}
\crefname{dfn}{Definition}{Definitions}
\crefname{ex}{Example}{Examples}
\crefname{section}{Section}{Sections}
\crefname{subsection}{Subsection}{Subsections}
\newcommand{\eps}{\varepsilon}
\newcommand{\To}{\rightarrow}
\newcommand{\as}{{\rm d}\mathbb{P}\times{\rm d} t-a.e.}
\newcommand{\ps}{\mathbb{P}-a.s.}
\newcommand{\F}{\mathcal{F}}
\newcommand{\E}{\mathbb{E}}
\newcommand{\s}{\mathcal{S}}
\newcommand{\mcal}{\mathcal{M}}
\newcommand{\M}{{\rm M}}
\newcommand{\T}{[0,T]}
\newcommand{\Ln}{\mathcal{I\!L}_n^\lambda}
\newcommand{\Lnk}{\mathcal{I\!L}_{n,k}^\lambda}
\newcommand{\R}{{\mathbb R}}
\newcommand {\Dis}{\displaystyle}
\newtheorem{thm}{Theorem}[section]
\newtheorem{lem}[thm]{Lemma}
\newtheorem{pro}[thm]{Proposition}
\newtheorem{rmk}[thm]{Remark}
\newtheorem{ex}[thm]{Example}
\journal{ArXiv}
\begin{document}
\begin{frontmatter}

\title{{Scalar BSDEs of iterated-logarithmically sublinear  generators with integrable terminal values}\tnoteref{found}}
\tnotetext[found]{This work is supported by National Natural Science Foundation of China (Nos. 12171471, 12031009 and 11631004), by Key Laboratory of Mathematics for Nonlinear Sciences (Fudan University), Ministry of Education, Handan Road 220, Shanghai 200433, China; by Lebesgue Center of Mathematics ``Investissements d'avenir" program-ANR-11-LABX-0020-01, by CAESARS-ANR-15-CE05-0024 and by MFG-ANR-16-CE40-0015-01.
\vspace{0.2cm}}


\author[Fan]{Shengjun Fan} \ead{f\_s\_j@126.com}
\author[Hu]{Ying Hu} \ead{ying.hu@univ-rennes1.fr}
\author[Tang]{Shanjian Tang\corref{cor}
} \ead{sjtang@fudan.edu.cn} \vspace{-0.5cm}

\affiliation[Fan]{organization={School of Mathematics, China University of Mining and Technology},
            city={Xuzhou 221116},
            country={China}}

\affiliation[Hu]{organization={Univ. Rennes, CNRS, IRMAR-UMR6625},
            city={F-35000, Rennes},
            country={France}}

\affiliation[Tang]{organization={Department of Finance and Control Sciences, School of Mathematical Sciences, Fudan University},
            city={Shanghai 200433},
            country={China}}

\cortext[cor]{Corresponding author}

\begin{abstract}
We establish a general existence and uniqueness of integrable adapted solutions to scalar backward stochastic differential equations with integrable parameters, where the generator $g$ has an iterated-logarithmic uniform continuity  in the second unknown variable $z$. The result improves our previous one in \cite{FanHuTang2023SCL}.\vspace{0.2cm}
\end{abstract}

\begin{keyword}
backward stochastic differential equation \sep integrable parameters \sep  iterated-logarithmically sublinear generator \sep existence and uniqueness \sep comparison theorem.
\vspace{0.2cm}

\MSC[2010] 60H10\vspace{0.2cm}
\end{keyword}

\end{frontmatter}
\vspace{-0.4cm}

\section{Introduction}
\label{sec:1-Introduction}
\setcounter{equation}{0}

Fix an integer $d\geq 1$ and a real $T>0$. Let $(\Omega, \F, \mathbb{P})$ be a complete probability space equipped with augmented filtration $(\F_t)_{t\in\T}$ generated by a standard $d$-dimensional Brownian motion $(B_t)_{t\in\T}$, and $\F_T=\F$. Consider the following scalar backward stochastic differential equation (BSDE in short):
\begin{equation}\label{eq:1}
  Y_t=\xi+\int_t^T g(s,Y_s,Z_s){\rm d}s-\int_t^T Z_s {\rm d}B_s, \ \ t\in\T,
\end{equation}
where $\xi$ is called the terminal condition, which is an $\F_T$-measurable real-valued random variable, the random field
$$
g:\Omega\times\T\times\R\times\R^{1\times d} \to \R
$$
is called the generator, which is an $(\F_t)$-adapted process for each $(y,z)$, and the pair of $(\F_t)$-adapted processes $(Y_t,Z_t)_{t\in\T}$ is called a solution of \eqref{eq:1}, which takes its values in $\R\times\R^{1\times d}$ such that $\ps$, $t\mapsto Y_t$ is continuous, $t\mapsto |g(t,Y_t,Z_t)|+|Z_t|^2$ is integrable, and satisfies \eqref{eq:1}. Denote by BSDE$(\xi,g)$ the BSDE with the terminal condition $\xi$ and the generator $g$, which are called the parameters of the BSDE.

Nonlinear BSDEs were initially introduced in \cite{PardouxPeng1990SCL}, who established an existence and uniqueness result on adapted solutions of multidimensional BSDEs with Lipschitz continuous generators and square integrable parameters. Since then,  BSDEs have been extensively studied due to their deep connections with various fields such as partial differential equations, mathematical finance, stochastic control and so on. The reader is  refereed to among others \cite{ElKarouiPengQuenez1997MF, Kobylanski2000AP, HuImkeller2005AAP} for more details. In particular, much efforts have been paid on the well-posedness of adapted solutions to BSDEs under various integrability on the parameters and  various growth and/or continuity  of the generator $g$ in the unknown variables $(y,z)$. For instance, some classical results can be found in \cite{LepeltierSanMartin1997SPL, Kobylanski2000AP, BriandHu2006PTRF, BriandHu2008PTRF, DelbaenHuBao2011PTRF, HuTang2015SPA, HuTang2018ECP, BuckdahnHuTang2018ECP, FanHu2019ECP, FanHu2021SPA, FanHuTang2023SPA} and  the references therein.

With adapted solutions of BSDEs, Peng \cite{Peng1997Notes} introduced the notion of conditional $g$-expectation of a square-integrable random variable, which is a  nonlinear extension of the conventional conditional expectation. Since the conventional conditional expectation is defined in the space of integrable  random variables, it is then asked that how to define the conditional $g$-expectation of an only integrable random variable---which entails solution of  BSDEs with only integrable parameters. It has been widely recognized that it is more difficult to solve BSDEs with only integrable parameters than those with $L^p$-integrable parameters ($p>1$). To the best of our knowledge, there are only few discussions on  adapted solution of  BSDEs with integrable parameters. In particular, an existence and uniqueness result is available  in \cite{BriandDelyonHu2003SPA}  in this direction for multidimensional BSDEs, where the generator $g$ is Lipschitz continuous and  grows   in $z$ in a sublinear way (see for example ${\rm (H2S)_\alpha}$ with $\alpha\in (0,1)$ in \cref{sec:2-Statement}). Subsequently  in \cite{FanLiu2010SPL,Fan2016SPA,Fan2018JOTP,XiaoFan2020KM}, new growth and continuity of the generator $g$ in $z$ are given for the  well-posedness of the integrable adapted solutions, such as the uniform continuity and a sublinear growth, the H\"{o}lder continuity and the quasi-H\"{o}lder continuity (see ${\rm (H5S)_\alpha}$ with $\alpha\in (0,1)$ in \cref{sec:2-Statement}). In the last few years, with the test function method in \cite{FanHu2021SPA} and the localization technique in \cite{BriandHu2006PTRF}, we proved in \cite{FanHuTang2023SCL} the existence of integrable solutions of scalar BSDEs with integrable parameters when the generator $g$ satisfies a logarithmic sublinear growth in $z$ (see ${\rm (H2)}_n^{\lambda}$ with $n=1$ and $\lambda>1/2$ in \cref{sec:2-Statement}), and the uniqueness when further $g$  has a logarithmic uniform continuity  in $z$ (see ${\rm (H5)}_n^{\lambda}$ with $n=1$ and $\lambda>1/2$ in \cref{sec:2-Statement}), which extend the above-mentioned growth and continuity  (see (i) of \cref{rmk:2.5} in \cref{sec:2-Statement} for details).

The objective of the present paper is to study  integrable solutions of scalar BSDEs with only integrable parameters under finer assumptions on the generator $g$, as a continuation of our previous work~\cite{FanHuTang2023SCL}.  The main result is formulated in \cref{thm:MainResult} in \cref{sec:2-Statement}. It gives the existence of an $L^1$ solution of a BSDE with integrable parameters when the generator $g$ has an iterated-logarithmic sublinear growth  in $z$ (see ${\rm (H2)}_n^{\lambda}$ with $n\geq 2$ and $\lambda>1/2$ in \cref{sec:2-Statement}), and the uniqueness of  integrable solutions in a proper space when the generator $g$ further has an iterated-logarithmic uniform continuity in $z$ (see ${\rm (H5)}_n^{\lambda}$ with $n\geq 2$ and $\lambda>1/2$ in \cref{sec:2-Statement}), which improves that of~\cite{FanHuTang2023SCL} (see (i) of \cref{rmk:2.3} in \cref{sec:2-Statement} for details). In fact, it gives a sequence of weaker and weaker conditions on the generator $g$ for the existence and uniqueness of the integrable solution of BSDE$(\xi,g)$ under only integrable parameters. For the existence, we first establish a crucial inequality (see \cref{pro:3.2} in \cref{sec:3-Proof}), and then find a proper test function (see \cref{pro:3.4} in \cref{sec:3-Proof}) to apply It\^{o}'s formula to obtain an a priori bound on the first component of adapted solutions to the approximating  BSDEs (see \cref{pro:3.5} in \cref{sec:3-Proof}), and finally utilize the localization technique to obtain the desired solution. For the uniqueness, we establish a general comparison theorem for the integrable solutions to the  BSDEs (see \cref{pro:2.4} in \cref{sec:2-Statement}), where the same a priori estimate technique as above and Theorem 2.1 of \cite{Fan2016SPA} play a key role.

Let us close the introduction by introducing some necessary notations and spaces used in this paper. For $a,b\in \R$, we denote $a\wedge b:=\min\{a,b\}$, $a^+:=\max\{a,0\}$ and $a^-:=-\min\{a,0\}$, and ${\rm sgn}(x):={\bf 1}_{x>0}-{\bf 1}_{x\leq 0}$, where ${\bf 1}_A$ is the indicator function of set $A$. Let ${\bf S}$ be the set of all continuous nondecreasing function $\rho(\cdot):[0,+\infty)\To [0,+\infty)$ with $\rho(0)=0$. For each pair of nonnegative integer $m>k\geq 0$ and each real sequence $\{a_n\}_{n=1}^{+\infty}$, we use the following convention:
$$
\prod\limits_{i=m}^{k} a_i:=1\ \ \ {\rm and}\ \ \ \sum\limits_{i=m}^{k} a_i:=0.\vspace{0.1cm}
$$
Furthermore, for each integer $n\geq 1$, by induction we denote the following function
$$
\ln^{(1)}(x):=\ln x,\ \ x\geq e^{(1)}\ \ {\rm and}\ \  \ln^{(n)}(x):=\ln^{(n-1)}(\ln^{(1)}(x))=\ln^{(1)}(\ln^{(n-1)}(x)), \ \ x\geq e^{(n)},
$$
where
$$
e^{(1)}:=e\ \ {\rm and}\ \ e^{(n)}:=e^{e^{(n-1)}}.
$$
For each $p>0$, let $\s^p$ be the set of $(\F_t)$-adapted continuous real-valued processes $(Y_t)_{t\in\T}$ satisfying
$$\|Y\|_{{\s}^p}:=\left(\E[\sup_{t\in\T} |Y_t|^p]\right)^{{1\over p}\wedge 1}<+\infty,\vspace{0.2cm}$$
and $\mcal^p$ the set of all $(\F_t)$-adapted $\R^{1\times d}$-valued processes $(Z_t)_{t\in\T}$ satisfying
$$
\|Z\|_{\mcal^p}:=\left\{\E\left[\left(\int_0^T |Z_t|^2{\rm d}t\right)^{p/2}\right] \right\}^{{1\over p}\wedge 1}<+\infty.\vspace{0.1cm}
$$
Denote by $\Sigma_T$ the set of all $(\F_t)$-stopping times $\tau$ valued in $\T$. For an $(\F_t)$-adapted real-valued process $(X_t)_{t\in\T}$, if the family $\{X_\tau: \tau\in \Sigma_T\}$ is uniformly integrable, then we say that it is of class (D).

The rest of this paper is organized as follows. In section 2 we state the main result and introduce several remarks and examples to illustrate our theoretical result, and in section 3 we give the proof.

\section{Statement of the main result}
\label{sec:2-Statement}
\setcounter{equation}{0}

We always suppose that $n\geq 1$ is a positive integer, $\alpha\in (0,1)$, $\beta,\lambda\geq 0$ and $\gamma,c>0$ are several nonnegative constants, $\xi$ is a terminal condition satisfying $\E[|\xi|]<+\infty$, and $(f_t)_{t\in \T}$ is an $(\F_t)$-adapted nonnegative process satisfying\vspace{-0.1cm}
$$
\E\left[\int_0^T f_t{\rm d}t\right]<+\infty.
$$
For an  integer $n\ge 1$ and a real number $\lambda\ge 0$, define the function
$$
\Ln(x):=\prod\limits_{i=1}^{n-1}\sqrt{\ln^{(i)}( e^{(n)}+x)} \left(\ln^{(n)}( e^{(n)}+x)\right)^\lambda, \quad x\ge 0.
$$
An equality or inequality between random variables are always understood in the sense of $\ps$. Let us introduce the following  assumptions on the generator $g$.
\begin{enumerate}

\item [(H1)]\label{H1} $\as$, $g(\omega,t,\cdot,\cdot)$ is continuous.
\item [${\rm (H2)}_n^{\lambda}$]\label{H2} $g$ has a one-sided linear growth in $y$ and an iterated-logarithmic sublinear growth in $z$, i.e., $\as$, for each $(y,z)\in \R\times\R^{1\times d}$,
    $$
    {\rm sgn}(y)g(\omega,t,y,z)\leq f_t(\omega)+\beta|y|+ \frac{\gamma |z|}{\Ln(|z|)}.
    $$

\item[(H3)]\label{H3} $g$ has a general growth in $(y,z)$, i.e., there exists a function $h(\cdot)\in {\bf S}$ such that $\as$,
    $$
    |g(\omega,t,y,z)|\leq f_t(\omega)+ h(|y|)+c |z|^2, \quad  \forall (y,z)\in \R\times\R^{1\times d}.
    $$

\item[(H4)]\label{H4} $g$ satisfies an extended monotonicity condition in $y$, i.e., there exists a concave function $\rho(\cdot)\in {\bf S}$ with $\rho(u)>0$ for $u>0$ and $\int_{0^+}\frac{{\rm d}u}{\rho(u)}=+\infty$ such that $\as$,
    $$
    g(\omega,t,y_1,z)-g(\omega,t,y_2,z)\leq \rho(y_1-y_2),\quad \forall  (y_1,y_2,z)\in \R\times\R\times \R^{1\times d} \hbox{ \rm with $y_1> y_2$}.
    $$

\item[${\rm (H5)}_n^{\lambda}$]\label{H5} $g$ has an iterated-logarithmic uniform continuity in $z$, i.e., there is a linearly growing function $\kappa(\cdot)\in {\bf S}$ such that $\as$,
    $$
    |g(\omega,t,y,z_1)-g(\omega,t,y,z_2)|\leq \kappa\left(\frac{|z_1-z_2|}{\Ln(|z_1-z_2|)}\right), \quad\forall  (y,z_1,z_2)\in \R\times \R^{1\times d}\times \R^{1\times d}.\vspace{0.2cm}
    $$
\end{enumerate}

\begin{rmk}\label{rmk:2.1}
We have the following four assertions.
\begin{itemize}
\item [(i)]  Assumptions (H1), (H3) and (H4)  are also required  in \cite{FanHuTang2023SCL}. And,  assumptions ${\rm (H2)}_{1}^{\lambda}$ and ${\rm (H5)}_{1}^{\lambda}$ with  $\lambda\in (1/2,1]$ are the assumptions (H2) and (H5) of \cite{FanHuTang2023SCL},  respectively.  Assumption ${\rm (H2)}_{1}^{0}$ is exactly the one (H1) of \cite{BriandDelyonHu2003SPA}, where $g$ has a one-sided linear growth in $y$ and a linear growth in $z$, and ${\rm (H5)}_{1}^{0}$ is exactly the uniform continuity assumption of $g$ in $z$ of \cite{Fan2016SPA, FanHuTang2023SPA}.

\item [(ii)] For each $n\geq 1$ and $\lambda>1/2$, we have
$$
\mathcal {I\!L}_{n+1}^\lambda (x)\leq K \, \Ln(x), \quad x\ge 0
$$
with a constant $K>0$ depending only on $(n,\lambda)$. Consequently,  for any  $\lambda>1/2$, both conditions ${\rm (H2)}_{n}^{\lambda}$ and ${\rm (H5)}_{n}^{\lambda}$ become weak as the integer $n$ increases.  

\item [(iii)]  The inequality $\ln^{(n)}(e^{(n)})\geq \ln e=1$ is true  for each $n\geq 1$. Both conditions ${\rm (H2)}_{n}^{\lambda}$ and ${\rm (H5)}_{n}^{\lambda}$ become weak as the parameter $\lambda$ decreases in the interval $(\frac{1}{2}, \infty)$. 

\item [(iv)] For each $n\geq 1$ and $k>e^{(n)}$, there is a constant $K>0$ depending only on $(n,k)$ such that
$$
1\leq \frac{\ln^{(n)}(k+x)}{\ln^{(n)}(e^{(n)}+x)}\leq K,\ \ x\geq 0.
$$
Consequently, the constant $e^{(n)}$ appearing in ${\rm (H2)}_{n}^{\lambda}$ and ${\rm (H5)}_{n}^{\lambda}$ can be replaced with a larger number. In addition,  the three  assumptions (H3), (H4) and ${\rm (H5)}_{n}^{\lambda}$ yield ${\rm (H2)}_{n}^{\lambda}$.
\end{itemize}
\end{rmk}

Our  main result  is stated as the following existence and uniqueness theorem.

\begin{thm}\label{thm:MainResult}
Let $n\geq 2$, $\lambda>1/2$ and the generator $g$ satisfy assumptions (H1), ${\rm (H2)}_{n}^{\lambda}$ and (H3). Then BSDE$(\xi,g)$ admits a solution $(Y_t,Z_t)_{t\in\T}$ such that $(Y, Z)\in \s^p\times \M^p$ with each $p\in (0,1)$,  and $Y$ belongs to class (D). And, there exists a constant $C>0$ depending only on $(\beta,\gamma,n,\lambda,T)$ such that\vspace{0.1cm}
\begin{equation}\label{eq:2.1}
|Y_t|\leq |Y_t|+\int_0^t f_s {\rm d}s\leq C\E\left[|\xi|+\int_0^T f_s {\rm d}s\ \middle|\F_t\right]+C,\ \ t\in \T.\vspace{0.2cm}
\end{equation}
Moreover, if assumptions (H4) and ${\rm (H5)}_{n}^{\lambda}$ also hold for the generator $g$, then the solution $(Y, Z)$ with $Y$ being of  class (D) is unique.\vspace{0.2cm}
\end{thm}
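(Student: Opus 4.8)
The plan is to obtain existence through a truncation--approximation scheme resting on one uniform a priori bound, and to obtain uniqueness from a comparison principle; the common engine in both parts is a test-function estimate carried out \emph{directly} under $\mathbb{P}$, which is exactly what makes the only-integrable setting tractable. For the approximation I would first regularize the data: replace $\xi$ by the bounded truncation $\xi^m:=(\xi\wedge m)\vee(-m)$, replace $f$ by a bounded $f^m\uparrow f$, and replace $g$ by a sequence $g^m$ (via a sup/inf-convolution or smooth truncation in $(y,z)$) that is globally Lipschitz yet still satisfies ${\rm (H2)}_n^\lambda$ and (H3) uniformly in $m$. Each ${\rm BSDE}(\xi^m,g^m)$ then admits a solution $(Y^m,Z^m)$ by the classical $L^2$ theory combined with the localization technique of \cite{BriandHu2006PTRF}, the latter being needed to absorb the quadratic term $c|z|^2$ permitted by (H3).

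\textbf{The a priori estimate (the crux).} Set $X_t:=|Y^m_t|+\int_0^t f^m_s\,{\rm d}s$. Tanaka's formula, ${\rm (H2)}_n^\lambda$, and discarding the nonnegative local time give
\[
|Y^m_t|\le |\xi^m|+\int_t^T\Big(f^m_s+\beta|Y^m_s|+\frac{\gamma|Z^m_s|}{\Ln(|Z^m_s|)}\Big){\rm d}s-\int_t^T {\rm sgn}(Y^m_s)\,Z^m_s\,{\rm d}B_s,
\]
so the only dangerous contribution is the barely-sublinear term $\gamma|Z^m_s|/\Ln(|Z^m_s|)$. I would then apply It\^o's formula to $\Psi(X_t)$ for an increasing, \emph{asymptotically linear} convex test function $\Psi$ whose small but carefully tuned convexity satisfies a pointwise inequality of the form $\tfrac12\Psi''(x)r^2\ge \gamma\Psi'(x)\,r/\Ln(r)-\ell(x)$ for all $x,r\ge 0$, with $\int_0^T\ell<\infty$; the existence of such $\Psi$ and the control of the defect $\ell$ hinge on $\lambda>1/2$ through the convergence of a Bertrand-type integral $\int^\infty {\rm d}x/\big(x\prod_{i=1}^{n-1}\ln^{(i)}x\,(\ln^{(n)}x)^{2\lambda}\big)$, which is finite iff $2\lambda>1$. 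The second-order term then dominates the $z$-contribution up to the integrable defect, the term $\beta|Y^m_s|\le\beta X_s$ is absorbed by a Gronwall/exponential-weight argument, and a localization of the stochastic integral shows its expectation vanishes. This produces $\Psi(X_t)\le\mathbb{E}[\Psi(X_T)\mid\F_t]+C'$; since $\Psi$ is asymptotically linear, inverting it yields \eqref{eq:2.1} with $C=C(\beta,\gamma,n,\lambda,T)$ independent of $m$, and the right-hand side is a uniformly integrable family over stopping times, giving the class-(D) property. This is the main obstacle: in $L^1$ one cannot transfer the naive Girsanov bound under the solution-dependent measure $\mathbb{Q}=\mathcal{E}(\int\theta\,{\rm d}B)\cdot\mathbb{P}$ back to $\mathbb{P}$ (there is no H\"older room), so the whole difficulty is compressed into constructing $\Psi$ and establishing this pointwise inequality, where $\lambda>1/2$ enters decisively.

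\textbf{Passage to the limit.} With \eqref{eq:2.1} uniform in $m$, standard $L^1$ stability estimates together with the localization of \cite{BriandHu2006PTRF} yield $\s^p$- and $\M^p$-bounds for $p\in(0,1)$ and the convergence, along a subsequence and in the relevant norms, of $(Y^m,Z^m)$ to a limit $(Y,Z)$ solving ${\rm BSDE}(\xi,g)$; passing to the limit in \eqref{eq:2.1} gives the stated bound, $(Y,Z)\in\s^p\times\M^p$, and $Y$ of class (D).

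\textbf{Uniqueness.} Assuming in addition (H4) and ${\rm (H5)}_n^\lambda$, I would first prove a comparison theorem for class-(D) $L^1$ solutions and then apply it in both directions. For two such solutions $(Y,Z)$, $(Y',Z')$ with the same data, set $\delta Y:=Y-Y'$ and $\delta Z:=Z-Z'$: (H4) controls the $y$-increment of the generators by the concave $\rho$, while ${\rm (H5)}_n^\lambda$ controls the $z$-increment by $\kappa(|\delta Z|/\Ln(|\delta Z|))$, so $\delta Y$ solves a BSDE with \emph{zero} terminal value whose generator is again iterated-log sublinear in $\delta Z$. Re-running the test-function a priori estimate on $|\delta Y|$, now with vanishing terminal data, together with Bihari's inequality --- applicable precisely because $\int_{0^+}{\rm d}u/\rho(u)=+\infty$ --- forces $\delta Y\equiv 0$ and then $\delta Z\equiv 0$; Theorem 2.1 of \cite{Fan2016SPA} supplies the uniform-continuity uniqueness input in $z$. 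The only delicate point here, as in the existence part, is that the estimate for $\delta Y$ must again be obtained directly under $\mathbb{P}$, which is handled by the same test function $\Psi$.
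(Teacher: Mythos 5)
Your proposal follows essentially the same route as the paper. The core of both arguments is the a priori estimate carried out directly under $\mathbb{P}$ by applying It\^o--Tanaka's formula to a test function of $\bar Y_t:=|Y_t|+\int_0^t f_s\,{\rm d}s$, and your identification of $\lambda>1/2$ via the Bertrand-type integral is exactly the paper's mechanism: its test function is $\varphi(s,x)=(k+x)\left[1-\left(\ln^{(n)}(k+x)\right)^{1-2\lambda}\right]\mu_s$ with an exponential time weight $\mu_s$, and its convexity profile $\varphi_{xx}\asymp \mu_s/\big((k+x)(\Lnk(x))^2\big)$ is precisely the integrand of your Bertrand tail, convergent iff $2\lambda>1$, which yields asymptotic linearity. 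The paper organizes the pointwise domination of the $z$-term as a standalone two-variable inequality (Propositions 3.1--3.2: $2xy/\Lnk(y)\le px^2/(\Lnk(x))^2+y^2$ for $k$ large), which you only gesture at, but the role it plays is the one you describe. Your uniqueness argument (comparison for class-(D) solutions, Bihari via $\int_{0^+}{\rm d}u/\rho(u)=+\infty$, Theorem 2.1 of \cite{Fan2016SPA} for the $z$-increment, re-running the same test-function estimate on the difference) is the paper's Proposition 2.4. One minor imprecision: your defect $\ell$ cannot be made bounded or time-integrable uniformly in $x$ --- optimizing $\tfrac12\Psi''(x)r^2-\gamma\Psi'(x)r/\Ln(r)$ over $r$ with $\Psi$ asymptotically linear leaves a defect of order $k+x$ --- but this is harmless since, exactly like the $\beta|y|$ term, it is linear in $x$ and absorbed by the exponential weight (the paper's choice $\mu_s=\exp[2(\beta+2\gamma^2/(2\lambda-1))s]$ does both jobs at once), so your Gronwall remark already contains the fix.

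The one step that would genuinely fail as stated is your regularization of $g$ by sup/inf-convolution to a globally Lipschitz $g^m$. Assumption (H3) permits two-sided quadratic growth in $z$ (e.g.\ $g(y,z)=-c|z|^2\sigma(y)$ with $\sigma\ge 0$ smooth, vanishing for $y\le 0$, satisfies (H1), ${\rm (H2)}_n^{\lambda}$ and (H3)), and for such $g$ the Lepeltier--San Martin convolution $\inf_{(y',z')}\left[g(y',z')+m(|y-y'|+|z-z'|)\right]$ is identically $-\infty$: a linear penalty cannot beat $-c|z'|^2$. No globally Lipschitz approximation of this kind exists while keeping (H3) and pointwise convergence. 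The paper avoids Lipschitz regularization altogether: following \cite{FanHuTang2023SCL} it invokes the localization technique of \cite{BriandHu2006PTRF}, solving on random intervals where the solution is a priori bounded (existence there from the continuous/quadratic-generator theory under (H1) and (H3), in the spirit of \cite{LepeltierSanMartin1997SPL} and Kobylanski) and stitching the pieces together via the uniform bound \eqref{eq:2.1}. If you prefer a global approximation, truncate the $z$-argument inside $g$, i.e.\ $g^m(y,z):=g(y,q_m(z))$ with $q_m$ the projection onto the centered ball of radius $m$: this preserves (H1), ${\rm (H2)}_n^{\lambda}$ and (H3) uniformly in $m$ and keeps your a priori estimate intact, whereas mollification to Lipschitz does not survive (H3).
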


\begin{rmk}\label{rmk:2.3}
With respect to \cref{thm:MainResult}, we make the following two remarks.
\begin{itemize}
\item [(i)] In view of  (i) and (ii) of \cref{rmk:2.1},  \cref{thm:MainResult} improves \cite[Theorem 1]{FanHuTang2023SCL}, since it gives a sequence of weaker and weaker conditions on the generator $g$ for the existence and uniqueness of the integrable solution of BSDE$(\xi,g)$ under only integrable parameters. In addition, the condition of $Z\in {\rm M}^p$ for each $p\in (0,1)$ is not required  for the uniqueness in \cref{thm:MainResult} as in
    \cite[Theorem 1]{FanHuTang2023SCL}, while it is assumed  in \cite{BriandDelyonHu2003SPA, FanLiu2010SPL, Fan2016SPA}.

\item [(ii)] Both Theorem 1 of \cite{FanHuTang2023SCL} and \cref{thm:MainResult} show that under assumptions (H1) and (H3) on the generator $g$, the condition ${\rm (H2)}_{n}^{\lambda}$ with $n\geq 1$ and $\lambda>1/2$ is sufficient to guarantee existence of the integrable solution of BSDE$(\xi,g)$ with integrable parameters. And,  \cite{HuTang2018ECP} showed that ${\rm (H2)}_{n}^{\lambda}$ with $n=1$ and $\lambda=0$ is not enough. Assertion (iii) of \cref{rmk:2.1} states that assumption  ${\rm (H2)}_{n}^{\lambda}$ becomes weak as the parameter $\lambda$ decreases. It is still open whether the condition ${\rm (H2)}_{n}^{\lambda}$ with $n\geq 1$ and $\lambda\in (0,1/2]$ is sufficient for the existence of an integrable solution.\vspace{0.2cm}
\end{itemize}
\end{rmk}

A general comparison result on the integrable solutions of BSDEs with integrable parameters is established in the following proposition, which generalizes Proposition 2.5 in \cite{FanHuTang2023SCL} by (i) and (ii) of \cref{rmk:2.1}, and naturally yields the uniqueness part in \cref{thm:MainResult}.

\begin{pro}\label{pro:2.4}
Let $n\geq 2$, $\lambda>1/2$, $\xi$ and $\xi'$ be two terminal conditions, $g$ and $g'$ be two generators, and $(Y_t, Z_t)_{t\in\T}$ and $(Y'_t, Z'_t)_{t\in\T}$ be  a solution of BSDE$(\xi, g)$ and BSDE$(\xi', g')$, respectively. Suppose that $g$ (resp. $g'$) satisfies assumptions (H4) and ${\rm (H5)}_{n}^{\lambda}$ and $(Y-Y')^+$ is of class (D). If $\xi\leq \xi'$ and
\begin{equation}\label{eq:2.2}
{\bf 1}_{Y_t>Y'_t}\left(g(t,Y'_t,Z'_t)-g'(t,Y'_t,Z'_t)\right)\leq 0\ \ \ ({\rm resp.}\  \ {\bf 1}_{Y_t>Y'_t}\left(g(t,Y_t,Z_t)-g'(t,Y_t,Z_t)\right)\leq 0\ ),
\end{equation}
then we have $Y_t\leq Y'_t$  for each $t\in\T$.\vspace{0.2cm}
\end{pro}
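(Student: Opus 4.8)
The plan is to reduce the comparison to showing that $(Y-Y')^+\equiv 0$ and to control the difference by the same test-function/a-priori-estimate machinery developed for the existence part. Throughout I treat the case where $g$ (rather than $g'$) satisfies (H4) and ${\rm (H5)}_{n}^{\lambda}$; the parenthetical ``resp.'' case is entirely symmetric. Set $\widehat Y:=Y-Y'$ and $\widehat Z:=Z-Z'$, so that $\widehat Y$ solves the BSDE with terminal value $\xi-\xi'\le 0$ and driver $g(s,Y_s,Z_s)-g'(s,Y'_s,Z'_s)$. On the event $\{Y_s>Y'_s\}=\{\widehat Y_s>0\}$ I first split this driver as
$$
[g(s,Y_s,Z_s)-g(s,Y'_s,Z_s)]+[g(s,Y'_s,Z_s)-g(s,Y'_s,Z'_s)]+[g(s,Y'_s,Z'_s)-g'(s,Y'_s,Z'_s)],
$$
and bound the three pieces respectively by (H4), by ${\rm (H5)}_{n}^{\lambda}$, and by the hypothesis \eqref{eq:2.2}. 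This yields the crucial pointwise estimate
$$
{\bf 1}_{\widehat Y_s>0}\bigl(g(s,Y_s,Z_s)-g'(s,Y'_s,Z'_s)\bigr)\le {\bf 1}_{\widehat Y_s>0}\Bigl(\rho(\widehat Y_s)+\kappa\bigl(\tfrac{|\widehat Z_s|}{\Ln(|\widehat Z_s|)}\bigr)\Bigr).
$$

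Next I would apply the generalized It\^o--Tanaka--Meyer formula to $(\widehat Y_t)^+$. Since $\widehat Y_T=\xi-\xi'\le 0$ gives $(\widehat Y_T)^+=0$, and since the local-time term of $\widehat Y$ at the origin enters with a favorable sign, I obtain a backward inequality exhibiting $(\widehat Y)^+$ as a nonnegative semimartingale whose ``driver'' is dominated, on $\{\widehat Y_s>0\}$, by $\rho(\widehat Y_s)+\kappa(|\widehat Z_s|/\Ln(|\widehat Z_s|))$ and whose terminal value is $0$. Because $\kappa$ is linearly growing one has $\kappa(x)\le A(1+x)$, so the $z$-dependence of this driver is exactly of the iterated-logarithmic sublinear type $\gamma'|z|/\Ln(|z|)$ appearing in ${\rm (H2)}_{n}^{\lambda}$, while the $y$-dependence is governed by the concave Osgood modulus $\rho$.

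The heart of the argument is then to run the a-priori-estimate technique of \cref{pro:3.5} on this inequality: I would apply the test function constructed in \cref{pro:3.4} to $(\widehat Y)^+$, whose purpose is precisely to absorb the sublinear term $\kappa(|\widehat Z_s|/\Ln(|\widehat Z_s|))$ against the second-order It\^o term $\tfrac12\Phi''(\cdot)|\widehat Z_s|^2$ that it generates. After localizing by stopping times and using that $(Y-Y')^+$ is of class (D) to guarantee the uniform integrability needed to take expectations and pass to the limit, the stochastic integral and the $z$-term are eliminated, leaving a closed scalar inequality for $u(t):=\E[(\widehat Y_t)^+]$ of the form $u(t)\le\int_t^T\rho(u(s))\,{\rm d}s$ (via Jensen's inequality and the concavity of $\rho$), with $u(T)=0$. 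Invoking Theorem 2.1 of \cite{Fan2016SPA} to treat the uniform-continuity part, together with the Osgood condition $\int_{0^+}{\rm d}u/\rho(u)=+\infty$, then forces $u\equiv 0$, whence $(\widehat Y_t)^+=0$ and $Y_t\le Y'_t$ for every $t\in\T$.

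I expect the main obstacle to be the treatment of the $z$-term $\kappa(|\widehat Z_s|/\Ln(|\widehat Z_s|))$: in the integrable framework it is only sublinear, not square-integrable, and---unlike the martingale increment---it does not vanish upon taking expectations, so it cannot be discarded by a naive Gronwall/Osgood argument. Its control hinges entirely on the fine structure of the test function of \cref{pro:3.4}, which must be compatible with the iterated-logarithmic rate $\Ln$, together with the class (D) hypothesis that renders the localization and the limit passage legitimate.
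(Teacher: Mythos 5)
Your overall route coincides with the paper's: you split the driver of $Y-Y'$ on $\{Y_s>Y'_s\}$ exactly as the paper does via (H4), ${\rm (H5)}_{n}^{\lambda}$ and \eqref{eq:2.2}, apply It\^o--Tanaka to $(Y-Y')^+$ with the favorable local-time sign, and you name precisely the ingredients the paper combines (the test function of \cref{pro:3.4}, the a priori technique of \cref{pro:3.5}, the class (D) localization, and Theorem 2.1 of \cite{Fan2016SPA}); as in the paper, one should also first invoke (iv) of \cref{rmk:2.1} so that ${\rm (H5)}_{n}^{\lambda}$ holds with a sufficiently large constant $k$ in place of $e^{(n)}$, with $k$ depending on $(n,\lambda,A)$, where $A$ is the common linear-growth constant of $\rho$ and $\kappa$.

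There is, however, a genuine gap at the synthesis: the closed inequality $u(t)\le\int_t^T\rho(u(s))\,{\rm d}s$ cannot come out of the test-function step, because $\varphi(s,0)>0$. Writing $\widehat Y:=Y-Y'$ and running the argument of \cref{pro:3.5} on $\widehat Y^+$, after absorbing $A|\widehat Z_s|/\Lnk(|\widehat Z_s|)$ into $\frac12\varphi_{xx}|\widehat Z_s|^2+\varphi_s$ and using $\frac12(k+x)\le\varphi(s,x)\le k_1(k+x)$ and $\varphi_x\le k_1$, what survives after taking expectations and Jensen's inequality is of the form
$$
\E\bigl[(\widehat Y_t)^+\bigr]\;\le\; C+2k_1\int_t^T\rho\bigl(\E[(\widehat Y_s)^+]\bigr)\,{\rm d}s,
$$
where $C>0$ is at least of order $k$ (it carries $\varphi(T,0)\approx k_1k$ together with the residual additive $A$ from $\kappa(v)\le A(1+v)$) and does \emph{not} vanish; the Osgood condition then yields only \emph{boundedness} of $\widehat Y^+$, never $u\equiv 0$, since the test function cannot see that the terminal datum is zero. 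The correct division of labor, as in Proposition 2.5 of \cite{FanHuTang2023SCL} to which the paper defers, is two-staged: first, the a priori estimate of \cref{pro:3.5} applied to $\widehat Y^+$ (terminal value $0$, ``$f$'' a constant) upgrades the class (D) hypothesis to a uniform bound on $\widehat Y^+$; second, the mechanism of Theorem 2.1 of \cite{Fan2016SPA} --- the splitting $\kappa(v)\le\kappa(\eps)+C_\eps v$ together with a Girsanov change of measure absorbing the linear part $C_\eps|\widehat Z_s|$, which is now legitimate because boundedness (unlike class (D)) survives the change of measure --- produces the Osgood inequality with a vanishing additive error $\kappa(\eps)(T-t)$, and letting $\eps\downarrow 0$ forces $\widehat Y^+\equiv 0$. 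In your sketch these two mechanisms are conflated: you claim the clean Osgood inequality from the test function alone (which fails for the reason above) and then invoke \cite{Fan2016SPA} as an afterthought, where it would already be redundant. Your closing paragraph correctly flags that the $z$-term does not vanish under expectations, but its removal is not accomplished by the fine structure of the test function; that tool only bounds it, and the uniform-continuity/Girsanov stage is indispensable.
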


Let us further introduce the following assumptions on the generator $g$, which is closely related to assumptions ${\rm (H2)}_n^{\lambda}$ and ${\rm (H5)}_n^{\lambda}$. We would like to mention that some similar assumptions to the following ${\rm (H2S)}_{\alpha}$ and ${\rm (H5S)}_{\alpha}$ have been used in \cite{BriandDelyonHu2003SPA} and \cite{XiaoFan2020KM}, respectively. It will be shown in the following \cref{rmk:2.5} that ${\rm (H5)}_n^{\lambda}$ and ${\rm (H5S)}_{\alpha}$ are respectively equivalent to the following ${\rm (H5')}_n^{\lambda}$ and ${\rm (H5S')}_{\alpha}$.

\begin{enumerate}
\item [${\rm (H2S)}_{\alpha}$]\label{H2S} $g$ has a one-sided linear growth in $y$ and a sublinear growth in $z$, i.e., $\as$, for each $(y,z)\in \R\times\R^{1\times d}$,
    $$
    {\rm sgn}(y)g(\omega,t,y,z)\leq f_t(\omega)+\beta|y|+\gamma |z|^{\alpha}.
    $$

\item[${\rm (H5S)}_{\alpha}$]\label{H5S} $g$ satisfies a quasi-H\"{o}lder continuity condition in $z$, i.e., there exists a function $\bar \kappa(\cdot)\in {\bf S}$ with linear growth such that $\as$, for each $(y,z_1,z_2)\in \R\times \R^{1\times d}\times \R^{1\times d}$,
    $$
    |g(\omega,t,y,z_1)-g(\omega,t,y,z_2)|\leq \bar\kappa\left(|z_1-z_2|^\alpha\right).
    $$

\item[${\rm (H5')}_n^{\lambda}$]\label{H5'} There exists a constant $A>0$ and a function $\tilde\kappa(\cdot)\in {\bf S}$ with linear growth such that $\as$, for each $(y,z_1,z_2)\in \R\times \R^{1\times d}\times \R^{1\times d}$,
    \begin{equation}\label{eq:2.3}
     |g(\omega,t,y,z_1)-g(\omega,t,y,z_2)|\leq \tilde\kappa\left(|z_1-z_2|\right)
    \end{equation}
    and
    \begin{equation}\label{eq:2.4}
    |g(\omega,t,y,z_1)-g(\omega,t,y,z_2)|\leq \frac{A|z_1-z_2|}{\Ln(|z_1-z_2|)}+A.
    \end{equation}

\item[${\rm (H5S')}_{\alpha}$]\label{H5S'} There exists a constant $A>0$ and a function $\tilde\kappa(\cdot)\in {\bf S}$ with linear growth such that $\as$, for each $(y,z_1,z_2)\in \R\times \R^{1\times d}\times \R^{1\times d}$, \eqref{eq:2.3} holds and
    $$
    |g(\omega,t,y,z_1)-g(\omega,t,y,z_2)|\leq A|z_1-z_2|^\alpha +A.
    $$
\end{enumerate}

\begin{rmk}\label{rmk:2.5}
We have the following several remarks.
\begin{itemize}
\item [(i)] For each $\alpha\in (0,1)$, $n\geq 1$ and $\lambda\geq 0$, there exists a constant $K>0$ depending only on $(\alpha,n,\lambda)$ such that
    $$
     |x|^\alpha\leq \frac{K|x|}{\Ln(|x|)},\ \ x\in\R,
    $$
    which means that ${\rm (H2S)}_{\alpha}\Rightarrow {\rm (H2)}_n^{\lambda}$ and ${\rm (H5S)}_{\alpha}\Rightarrow {\rm (H5)}_n^{\lambda}$. In addition, the bigger the $\alpha$, the weaker the assumptions ${\rm (H2S)}_{\alpha}$ and ${\rm (H5S)}_{\alpha}$. In fact, the assertion on ${\rm (H2S)}_{\alpha}$ is obvious. And, it can be easily proved that for each $0<\alpha<\bar\alpha<1$, if the generator $g$ satisfies ${\rm (H5S)}_{\alpha}$ with the function $\bar\kappa(\cdot)$, then it has to satisfy ${\rm (H5S)}_{\bar\alpha}$ with the function $$\tilde\kappa(x):=\bar\kappa(x^{\alpha\over \bar\alpha}){\bf 1}_{0\leq x\leq 1}+\bar\kappa(x){\bf 1}_{x>1}.$$

\item [(ii)] It holds that ${\rm (H5)}_n^{\lambda}\Leftrightarrow {\rm (H5')}_n^{\lambda}$ for each $n\geq 1$ and $\lambda\geq 0$. In fact, since $\ln^{(i)}(e^{(n)})\geq 1$ for each $n\geq 1$ and $i=1,\cdots,n$
    and the function $\kappa(\cdot)$ in ${\rm (H5)}_n^{\lambda}$ is of linear growth, the statement of ${\rm (H5)}_n^{\lambda}\Rightarrow {\rm (H5')}_n^{\lambda}$ is obvious. Conversely, suppose that ${\rm (H5')}_n^{\lambda}$ holds with a constant $A>0$ and a function $\tilde\kappa(\cdot)$. Observe that for each $n\geq 1$ and $\lambda\geq 0$, there exist two constants $K_1,K_2>0$ depending only on $(n,\lambda)$ such that
    $$
    \Ln(x)\leq K_1, \ \ x\in [0,1]
    $$
    and
    $$
    1\leq \frac{K_2 x}{\Ln(x)}, \ \ x\in (1,+\infty).
    $$
    It then follows from \eqref{eq:2.3} and \eqref{eq:2.4} that ${\rm (H5)}_n^{\lambda}$ holds for the generator $g$ with the function
    $$
    \kappa(x):=
    \left\{
    \begin{array}{ll}
    \tilde\kappa(K_1x){\bf 1}_{0\leq x\leq 1}+ \tilde\kappa(K_1) x {\bf 1}_{x>1},& {\rm if}\ \tilde\kappa(K_1)\geq (1+K_2)A;\\
    \frac{(1+K_2)A}{\tilde\kappa(K_1)}\tilde\kappa(K_1x){\bf 1}_{0\leq x\leq 1}+(1+K_2)A x {\bf 1}_{x>1},& {\rm if}\  \tilde\kappa(K_1)<(1+K_2)A.
    \end{array}
    \right.
    $$
    Similarly, it also holds that ${\rm (H5S)}_{\alpha}\Leftrightarrow {\rm (H5S')}_{\alpha}$ for each $\alpha\in (0,1)$. In particular, from the above assertions we can deduce that if the generator $g$ is uniformly continuous and has a bounded growth in $z$, then it must satisfy ${\rm (H5S)}_{\alpha}$ and then ${\rm (H5)}_n^{\lambda}$ for each $\alpha\in (0,1)$, $n\geq 1$ and $\lambda\geq 0$.

\item [(iii)] In view of (ii) and (iii) in \cref{rmk:2.1} and the above (i), the following assertions can be verified.
\begin{itemize}
  \item If $g_1$ and $g_2$ satisfy respectively ${\rm (H5)}_n^{\lambda}$ and ${\rm (H5)}_n^{\bar\lambda}$ with $n\geq 1$ and $0\leq \lambda\leq \bar\lambda$, then anyone of their linear combinations must satisfy ${\rm (H5)}_n^{\bar\lambda}$.
\item If $g_1$ and $g_2$ satisfy respectively ${\rm (H5)}_n^{\lambda}$ and ${\rm (H5)}_m^{\bar\lambda}$ with $1\leq n<m$, $\lambda>1/2$ and $\bar\lambda\geq 0$, then anyone of their linear combinations must satisfy ${\rm (H5)}_m^{\bar\lambda}$.
\item If the generators $g_1$ and $g_2$ satisfy respectively ${\rm (H5S)}_{\alpha}$ and ${\rm (H5S)}_{\bar\alpha}$ with $0<\alpha\leq \bar\alpha<1$, then anyone of their linear combinations satisfy ${\rm (H5S)}_{\bar\alpha}$ and then ${\rm (H5)}_n^{\lambda}$ for each $n\geq 1$ and $\lambda\geq 0$.\vspace{0.2cm}
\end{itemize}
\end{itemize}
\end{rmk}

Finally, let us give two examples to which \cref{thm:MainResult} applies, but none of existing results could.

\begin{ex}\label{ex:2.6}
For each $(\omega,t,y,z)\in\Omega \times\T \times \R\times \R^{1\times d}$, define
$$
g(\omega,t,y,z):=B_t(\omega)-e^{y}\sin^2|z|+\frac{|z|\cos |z|}{\sqrt{\ln(e^8+|z|)}(\ln\ln (e^8+|z|))^{3\over 4}}-|z|^2\sin y.
$$
It is not hard to check that this generator $g$ satisfies assumptions (H1), ${\rm (H2)}_n^{\lambda}$ and (H3) with
$$
f_\cdot=B_\cdot+1,\ \beta=0, \ \gamma=1, \ n=2, \ \lambda=3/4,\  c=1\ \  {\rm and} \ \ h(u)=e^u.
$$
Then, by \cref{thm:MainResult} it can be concluded that BSDE$(\xi,g)$ admits a solution $(Y_t,Z_t)_{t\in \T}$ such that $(Y,Z)\in \s^p\times \M^p$ with each $p\in (0,1)$, and $Y$ is of class (D).\vspace{0.2cm}
\end{ex}

\begin{ex}\label{ex:2.7}
For each $(\omega,t,y,z)\in\Omega \times\T \times \R\times \R^{1\times d}$, define
$$
\bar g(\omega,t,y,z):=y^4{\bf 1}_{y\leq 0}+l(|y|)+\sin|z| +\sqrt{|z|}+|z|^{1\over 3}+\frac{|z|}{\ln (e+|z|)}+\frac{|z|}{\sqrt{\ln (e^8+|z|)}(\ln\ln(e^8+|z|))^{2\over 3}},
$$
where
$$
l(u):=u|\ln u|\ln|\ln u|{\bf 1}_{0\leq u\leq \eps}+l'_-(\eps)(u-l(\eps)){\bf 1}_{u>\eps},\ \ u\in (0,+\infty)
$$
with $\eps>0$ being sufficiently small. Note that for each $x_1,x_2\geq 0$, we have $|F(x_1)-F(x_2)|\leq F(|x_1-x_2|)$ for any concave function $F(\cdot)\in {\bf S}$. It can be verified that for each $x_1,x_2\geq 0$,
$$
|l(x_1)-l(x_2)|\leq l(|x_1-x_2|)\ \ {\rm with}\ \ \int_{0^+} \frac{{\rm d}u}{l(u)}=+\infty,
$$
$$
|\sin x_1-\sin x_2|\leq |x_1-x_2|,\ \ \ |\sin x_1-\sin x_2|\leq 2,\vspace{0.1cm}
$$
$$
\left|\sqrt{x_1}-\sqrt{x_2}\right|\leq \sqrt{|x_1-x_2|},\ \ \ \ \left|x_1^{1\over 3}-x_2^{1\over 3}\right|\leq |x_1-x_2|^{1\over 3},\vspace{0.1cm}
$$
$$
\left|\frac{x_1}{\ln (e+x_1)}-\frac{x_2}{\ln (e+x_2)}\right|\leq \frac{|x_1-x_2|}{\ln (e+|x_1-x_2|)}
$$
and
$$
\begin{array}{ll}
\Dis \left|\frac{x_1}{\sqrt{\ln (e^8+x_1)}(\ln\ln(e^8+x_1))^{2\over 3}}-\frac{x_2}{\sqrt{\ln (e^8+x_2)}(\ln\ln(e^8+x_2))^{2\over 3}}\right|\vspace{0.2cm}\\
\Dis \ \ \leq \frac{|x_1-x_2|}{\sqrt{\ln (e^8+|x_1-x_2|)}\ln\ln(e^8+|x_1-x_2|))^{2\over 3}}.\vspace{0.1cm}
\end{array}
$$
Based on these above observations, by virtue of \cref{rmk:2.5} we can easily prove that this generator $\bar g$ satisfies assumptions (H1), ${\rm (H2)}_n^{\lambda}$, (H3), (H4) and ${\rm (H5)}_n^{\lambda}$ with $\rho(u)=l(u)$, $n=2$ and $\lambda=2/3$. Then, by \cref{thm:MainResult} it can be concluded that BSDE$(\xi,g)$ admits a unique solution $(Y_t,Z_t)_{t\in \T}$ such that $(Y,Z)\in \s^p\times \M^p$ with each $p\in (0,1)$ and $Y$ is of  class (D).
\end{ex}

\section{Proof of the main result}
\label{sec:3-Proof}
\setcounter{equation}{0}

First, we have the following crucial inequality.
\begin{pro}\label{pro:3.1}
Let $p>1$ and $\psi(\cdot):[0,+\infty)\To [1,+\infty)$ be a twice continuously differentiable function such that for each $x\geq 0$, $\psi'(x)>0$, $\psi''(x)<0$,
\begin{equation}\label{eq:3-1}
x\left(\ln \psi(x)\right)'\leq {1\over 4}\wedge (1-\frac{1}{\sqrt{p}}),
\end{equation}
\begin{equation}\label{eq:3-2}
-x\left(\ln \psi'(x)\right)'\leq {3 \over 2},
\end{equation}
and
\begin{equation}\label{eq:3-3}
\psi(\sqrt{p} x\psi(x))\leq \sqrt{p}\psi(x).
\end{equation}
Then, we have
\begin{equation}\label{eq:3-4}
\frac{2xy}{\psi(y)}\leq \frac{p x^2}{\psi^2(x)}+y^2, \quad\forall (x, y)\in [0,\infty)\times [0,\infty).
\end{equation}
\end{pro}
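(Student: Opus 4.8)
The plan is to fix $x$ and reduce \eqref{eq:3-4} to a one-variable optimization in $y$. For $x=0$ the claim is trivial (the left side vanishes while the right side is nonnegative), so assume $x>0$. It then suffices to prove
\[
\Phi(y):=\frac{2xy}{\psi(y)}-y^2\le \frac{px^2}{\psi^2(x)}\qquad\text{for all } y\ge 0,
\]
i.e.\ to bound $\sup_{y\ge 0}\Phi(y)$. Before optimizing, I note that a single application of Young's inequality already settles the easy regime: writing $\frac{2xy}{\psi(y)}=2\cdot\frac{\sqrt p\,x}{\psi(x)}\cdot\frac{\psi(x)y}{\sqrt p\,\psi(y)}$ gives $\frac{2xy}{\psi(y)}\le \frac{px^2}{\psi^2(x)}+\frac{\psi^2(x)}{p\,\psi^2(y)}\,y^2$, which is already $\le\frac{px^2}{\psi^2(x)}+y^2$ whenever $\psi(x)\le\sqrt p\,\psi(y)$. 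Thus the only difficulty is the region where $\psi(x)$ is large relative to $\psi(y)$, and I expect \eqref{eq:3-1} and \eqref{eq:3-3} to be precisely what rules out a genuine violation there.

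To treat all $y$ uniformly I would argue variationally. Since $\psi\ge 1$, we have $\Phi(y)\le 2xy-y^2\to-\infty$, so $\Phi$ attains its maximum on $[0,\infty)$ at some $y_0$. If $y_0=0$ then $\sup\Phi=\Phi(0)=0$, which is dominated by the nonnegative right side. Otherwise $y_0$ is an interior critical point, and the first-order condition $x\cdot\frac{\psi(y_0)-y_0\psi'(y_0)}{\psi^2(y_0)}=y_0$ (note $x,y_0>0$ forces $\psi(y_0)-y_0\psi'(y_0)>0$) yields
\[
x=\frac{y_0\,\psi(y_0)}{1-r},\qquad r:=y_0\bigl(\ln\psi(y_0)\bigr)'=\frac{y_0\psi'(y_0)}{\psi(y_0)}.
\]
Here \eqref{eq:3-1} enters through its $\bigl(1-\tfrac1{\sqrt p}\bigr)$ part: it forces $0<r\le 1-\tfrac1{\sqrt p}$, hence $1-r\ge \tfrac1{\sqrt p}$ and therefore $x\le \sqrt p\,y_0\psi(y_0)$. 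Since $\psi$ is nondecreasing, monotonicity together with \eqref{eq:3-3} (applied at the point $y_0$) gives $\psi(x)\le \psi\bigl(\sqrt p\,y_0\psi(y_0)\bigr)\le \sqrt p\,\psi(y_0)$, i.e.\ $\psi^2(x)\le p\,\psi^2(y_0)$. This is the crux: the maximizer automatically lands in the favorable regime where \eqref{eq:3-3} is usable.

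It remains to combine these at $y_0$. Substituting $x=y_0\psi(y_0)/(1-r)$ into $\Phi$ gives $\Phi(y_0)=y_0^2\,\frac{1+r}{1-r}$, while $\frac{px^2}{\psi^2(x)}=\frac{p}{\psi^2(x)}\cdot\frac{y_0^2\psi^2(y_0)}{(1-r)^2}\ge \frac{y_0^2}{(1-r)^2}$ by the bound $\psi^2(x)\le p\,\psi^2(y_0)$ just obtained. Since $(1+r)(1-r)=1-r^2\le 1$, we get $\Phi(y_0)=\frac{(1-r^2)\,y_0^2}{(1-r)^2}\le \frac{y_0^2}{(1-r)^2}\le \frac{px^2}{\psi^2(x)}$, which is exactly \eqref{eq:3-4}. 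The step I expect to be most delicate is the interaction in the preceding paragraph: recognizing that the first-order condition pins $x$ to $y_0\psi(y_0)$ up to the factor $(1-r)^{-1}\le\sqrt p$ controlled by \eqref{eq:3-1}, so that \eqref{eq:3-3} can be invoked to compare $\psi(x)$ with $\psi(y_0)$. I expect the remaining regularity hypotheses ($\psi'>0$, $\psi''<0$, and \eqref{eq:3-2}) to serve mainly to guarantee that this variational analysis is well posed—ensuring the supremum is realized at the boundary or at an interior point characterized by the first-order condition—rather than entering the final algebraic comparison.
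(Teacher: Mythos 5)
Your proof is correct, and it takes a genuinely different route from the paper's. The paper writes $f(x,y)=y^2-\frac{2xy}{\psi(y)}+\frac{px^2}{\psi^2(x)}$, disposes of $y\geq x$ via the completed square and monotonicity of $\psi$, and then works on $[0,x]$: it uses \eqref{eq:3-1} and \eqref{eq:3-2} to prove $\partial_y^2 f(x,\cdot)>0$, hence a unique interior minimizer $y_0$, introduces an auxiliary point $y_1$ defined implicitly by $y_1=\frac{x}{\sqrt{p}\,\psi(y_1)}$, shows $\partial_y f(x,y_1)\leq 0$ so that $y_1\leq y_0$, and only then invokes \eqref{eq:3-3} at $y_1$ to get $\frac{x^2}{\psi^2(y_0)}\leq \frac{x^2}{\psi^2(y_1)}\leq \frac{px^2}{\psi^2(x)}$. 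You instead maximize $\Phi(y)=\frac{2xy}{\psi(y)}-y^2$ over all $y\geq 0$ (attainment follows from the coercivity $\Phi(y)\leq 2xy-y^2$, using $\psi\geq 1$) and exploit the first-order condition at the maximizer itself: it pins $x=\frac{y_0\psi(y_0)}{1-r}$ with $r=y_0(\ln\psi(y_0))'\in(0,1-\frac{1}{\sqrt p}]$ by \eqref{eq:3-1}, whence $x\leq \sqrt{p}\,y_0\psi(y_0)$ and \eqref{eq:3-3} applies directly at $y_0$ to give $\psi^2(x)\leq p\,\psi^2(y_0)$; the algebra $\Phi(y_0)=\frac{(1-r^2)y_0^2}{(1-r)^2}\leq \frac{y_0^2}{(1-r)^2}\leq \frac{px^2}{\psi^2(x)}$ then closes the argument, and I have checked each of these computations. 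Notably, your argument never uses the concavity $\psi''<0$, the hypothesis \eqref{eq:3-2}, or the $\frac14$ cap in \eqref{eq:3-1} (and only needs $\psi\in C^1$ with $\psi'\geq 0$): in the paper these serve solely to secure strict convexity of $f(x,\cdot)$ on $[0,x]$ and thereby locate the minimizer relative to $y_1$, machinery that your direct appeal to the maximizer's first-order condition makes unnecessary. What the paper's route buys is a complete picture of the critical-point structure (uniqueness of $y_0$ and the comparison $y_1\leq y_0$); what yours buys is a shorter proof under a strictly smaller hypothesis set, which would in turn lighten the verifications required of the concrete function $\psi=\Lnk$ in \cref{pro:3.2}.
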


\begin{proof}
For $(x,y)\in [0,+\infty)\times [0,+\infty)$, define the function\vspace{-0.1cm}
\begin{equation}\label{eq:3-5}
f(x,y):=\Dis y^2-\frac{2xy}{\psi(y)}+\frac{px^2}{\psi^2(x)}=\Dis \left(y-\frac{x}{\psi(y)}\right)^2+\frac{px^2}{\psi^2(x)}
-\frac{x^2}{\psi^2(y)}.\vspace{0.1cm}
\end{equation}
Clearly, it suffices to prove that $f(x,y)\geq 0$ for each $x,y\geq 0$. By \eqref{eq:3-5} it is obvious for $y\geq x$. Hence,  we only need to prove that $f(x,y)\geq 0$ for each $x\in (0,+\infty)$ and $y\in [0,x]$.\vspace{0.2cm}

Now, fix arbitrary $x\in (0,+\infty)$ and let $\bar f(y):=f(x,y),\ y\in [0,x]$. Then we have
\begin{equation}\label{eq:3-6}
\bar f'(y)=2y-2x\left(\frac{y}{\psi(y)}\right)'=2y-\frac{2x(\psi(y)-y\psi'(y))}{\psi^2(y)}, \ \ y\in [0,x]
\end{equation}
and
\begin{equation}\label{eq:3-7}
\bar f''(y)=2-2x\left(\frac{y}{\psi(y)}\right)''=2+
\frac{2x\left(2\psi(y)\psi'(y)-2y(\psi'(y))^2+y\psi(y)\psi''(y)\right)}
{\psi^3(y)}, \ \ y\in [0,x].
\end{equation}
It follows from \eqref{eq:3-1} and \eqref{eq:3-2} that
$$
2y(\psi'(y))^2-y\psi(y)\psi''(y)\leq {1\over 2}\psi(y)\psi'(y)+{3\over 2}\psi(y)\psi'(y)=2\psi(y)\psi'(y),
$$
which together with \eqref{eq:3-7} yields that $\bar f''(\cdot)>0$, and then $\bar f(\cdot)$ is a strictly convex function on $[0,x]$. Furthermore, note by \eqref{eq:3-6} that $\bar f'(0)=-2x/\psi(0)<0$ and\vspace{0.1cm}
$$
\bar f'(x)=2x-\frac{2x}{\psi(x)}+\frac{2x^2\psi'(x)}{\psi^2(x)}\geq \frac{2x^2\psi'(x)}{\psi^2(x)}>0.\vspace{0.1cm}
$$
It follows that there exists a unique $y_0\in (0,x)$ such that $\bar f'(y_0)=0$ and
\begin{equation}\label{eq:3-8}
f(x,y)=\bar f(y)\geq \bar f(y_0)=f(x,y_0),\ \ y\in [0,x].\vspace{0.2cm}
\end{equation}

In the sequel, since the function $y\psi(y), y\in [0,+\infty)$ is strictly increasing with its range being $[0,+\infty)$, we can conclude that there exists a unique real $y_1\in (0,+\infty)$ such that\vspace{-0.1cm}
\begin{equation}\label{eq:3-9}
y_1=\frac{x}{\sqrt{p}\psi(y_1)}.\vspace{0.2cm}
\end{equation}
Then, $y_1\in (0,x)$ and it follows from \eqref{eq:3-6}, \eqref{eq:3-9} and \eqref{eq:3-1} that\vspace{0.1cm}
$$
\bar f'(y_1)=2y_1-\frac{2x}{\psi(y_1)}+ \frac{2xy_1\psi'(y_1)}{\psi^2(y_1)}= -\frac{2x\left[\left(1-\frac{1}{\sqrt{p}}\right)\psi(y_1)-y_1\psi'(y_1) \right]}{\psi^2(y_1)}\leq 0.\vspace{0.2cm}
$$
Therefore, $y_1\leq y_0$ and then by \eqref{eq:3-8} and \eqref{eq:3-5} we deduce that
\begin{equation}\label{eq:3-10}
f(x,y)\geq f(x,y_0)=\left(y_0-\frac{x}{\psi(y_0)}\right)^2+\frac{px^2}{\psi^2(x)}
-\frac{x^2}{\psi^2(y_0)}\geq \frac{px^2}{\psi^2(x)}
-\frac{x^2}{\psi^2(y_1)},\ \ \ y\in [0,x].\vspace{0.2cm}
\end{equation}
Finally, it follows from \eqref{eq:3-3} and \eqref{eq:3-9} that
\begin{equation}\label{eq:3-11}
\sqrt{p}\psi(y_1)\geq \psi(\sqrt{p}y_1\psi(y_1))=\psi(x).
\end{equation}
Then, by \eqref{eq:3-10} and \eqref{eq:3-11} we obtain that $f(x,y)\geq 0$ for each $x\in (0,+\infty)$ and $y\in [0,x]$, which is the desired conclusion.
\end{proof}

By virtue of \cref{pro:3.1}, we can establish the following key inequality.

\begin{pro}\label{pro:3.2}
Let $n\geq 1$, $\lambda\geq 0$ and $p>1$. Then, there exists a positive constant $k_{n,\lambda,p}\geq e^{(n)}$ depending only on $(n,\lambda,p)$ such that for each $k\geq k_{n,\lambda,p}$, we have
\begin{equation}\label{eq:3-12}
\frac{2xy}{\Lnk(y)}\leq \frac{px^2}{\left(\Lnk(x)\right)^2}+y^2,\ \ x,y\geq 0,
\end{equation}
where and hereafter,
$$
\Lnk(x):=\prod\limits_{i=1}^{n-1}\sqrt{\ln^{(i)}(k+x)} \left(\ln^{(n)}( k+x)\right)^\lambda.\vspace{0.2cm}
$$
\end{pro}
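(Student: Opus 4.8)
The plan is to deduce \eqref{eq:3-12} from \cref{pro:3.1} by setting $\psi:=\Lnk$ and verifying, for $k$ large enough, the structural hypotheses of \cref{pro:3.1}; the threshold $k_{n,\lambda,p}$ will be obtained by collecting the finitely many lower bounds on $\ln k$ that emerge, each depending only on $(n,\lambda,p)$, while keeping $k\ge e^{(n)}$. Throughout I abbreviate $\ell_i:=\ln^{(i)}(k+x)$ (with $\ell_0:=k+x$) and introduce
\[
g_j(x):=\prod_{l=0}^{j-1}\frac{1}{\ell_l}=\bigl(\ln^{(j)}(k+x)\bigr)',\qquad j\ge 1,
\]
so that $g_1=1/(k+x)$, $g_{j+1}=g_j/\ell_j\le g_j$ (using $\ell_j\ge 1$, valid since $k\ge e^{(n)}$), and a direct computation gives $g_j'=-g_j\sum_{i=1}^{j} g_i$. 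Writing $\ln\psi(x)=\tfrac12\sum_{j=2}^n\ell_j+\lambda\,\ell_{n+1}$, its derivative is $\phi:=(\ln\psi)'=\tfrac12\sum_{j=2}^n g_j+\lambda g_{n+1}$. Since $k\ge e^{(n)}$ forces $\ell_i\ge1$ for $1\le i\le n$, one has $\psi\ge1$, so $\psi\colon[0,\infty)\to[1,\infty)$ is twice continuously differentiable with $\psi'=\phi\psi>0$.

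First I would record the two elementary smallness facts that drive everything: $x g_1=x/(k+x)\le1$, while for $j\ge2$,
\[
x g_j\le\frac{x}{k+x}\cdot\frac{1}{\ell_1}\le\frac{1}{\ln k},
\]
and correspondingly $\phi\le C_1\,g_2\le C_1 g_1/\ln k$ with $C_1:=\tfrac{n-1}{2}+\lambda$. Condition \eqref{eq:3-1} then follows at once, since $x\phi\le C_1/\ln k$ can be made $\le\tfrac14\wedge(1-1/\sqrt p)$. For concavity ($\psi''<0$) I would use $\psi''=\psi(\phi'+\phi^2)$ together with $-\phi'\ge g_1\phi$ (from $\sum_{i=1}^{j} g_i\ge g_1$ in $g_j'=-g_j\sum_{i=1}^{j} g_i$) and $\phi^2\le(C_1/\ln k)g_1\phi\le(C_1/\ln k)(-\phi')$; for $\ln k>C_1$ this yields $\phi'+\phi^2<0$. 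For \eqref{eq:3-2} I would expand $-x(\ln\psi')'=-x\phi-x\phi'/\phi$: the first term is $\le0$, and a term-by-term comparison shows $-\phi'/\phi\le\sum_{j=1}^{n+1}g_j$, whence $-x\phi'/\phi\le x g_1+\sum_{j\ge2}x g_j\le1+n/\ln k\le\tfrac32$ once $\ln k\ge2n$.

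The delicate point is the growth condition \eqref{eq:3-3}, i.e. $\psi(\sqrt p\,x\psi(x))\le\sqrt p\,\psi(x)$, which after taking logarithms reads $F(w)-F(x)\le\tfrac12\ln p$ for $F:=\ln\psi$ and $w:=\sqrt p\,x\psi(x)$. Here I would write $F(w)-F(x)=\int_x^w\phi(t)\,dt$ and use $\phi\le C_1 g_2=C_1/((k+t)\ln(k+t))$ to get, by telescoping,
\[
F(w)-F(x)\le C_1\bigl(\ln^{(2)}(k+w)-\ln^{(2)}(k+x)\bigr)=C_1\ln\frac{\ln(k+w)}{\ln(k+x)}.
\]
The ratio is controlled by the crude bound $k+w\le\sqrt p\,\psi(x)(k+x)$, which gives $\ln(k+w)\le\ln(k+x)+\tfrac12\ln p+F(x)$ and hence $\ln(k+w)/\ln(k+x)\le1+\delta_k$, where $\delta_k:=\bigl(\tfrac12\ln p+C_1\ln\ln k\bigr)/\ln k\to0$ uniformly in $x\ge0$ because $F(x)\le C_1\ell_2$ and $u\mapsto(\ln u)/u$ is decreasing. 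Thus $F(w)-F(x)\le C_1\delta_k\to0$, which drops below the fixed positive number $\tfrac12\ln p$ once $k$ is large. I expect this step to be the main obstacle, precisely because $w$ is vastly larger than $x$, so the required gain must come entirely from the extremely slow growth of the iterated logarithm (the $g_2$-integral telescoping into a single $\ln^{(2)}$-increment) rather than from any crude monotonicity. Collecting the finitely many lower bounds on $\ln k$ furnishes $k_{n,\lambda,p}\ge e^{(n)}$, and \cref{pro:3.1} then yields \eqref{eq:3-12}; the degenerate case $n=1$, $\lambda=0$, where $\psi\equiv1$, is immediate from Young's inequality $2xy\le px^2+y^2$.
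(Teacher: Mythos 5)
Your proof is correct, and its skeleton is the same as the paper's: both set $\psi:=\Lnk$ and check the hypotheses \eqref{eq:3-1}--\eqref{eq:3-3} of \cref{pro:3.1} for $k$ large. The verifications of \eqref{eq:3-1} and \eqref{eq:3-2} match the paper's in substance (your $g_j$-calculus with $g_j'=-g_j\sum_{i=1}^j g_i$ is a tidier packaging of the paper's explicit derivative formulas, and your concavity argument via $\psi''=\psi(\phi'+\phi^2)$ with $-\phi'\geq g_1\phi$ and $\phi^2\leq (C_1/\ln k)(-\phi')$ is sound). Where you genuinely diverge is on the delicate growth condition \eqref{eq:3-3}: the paper fixes $\delta:=p^{1/(n-1+2\lambda)}>1$, chooses $k$ so that $\sqrt{p}(k+x)\psi(x)\leq(k+x)^\delta$ and $\delta\ln^{(i)}(k+x)\leq(\ln^{(i)}(k+x))^\delta$, and propagates the bound $\ln^{(i)}(k+\sqrt{p}\,x\psi(x))\leq\delta\ln^{(i)}(k+x)$ through all levels by induction, so that the factors $\delta^{1/2}$ (resp.\ $\delta^{\lambda}$) multiply out exactly to $\sqrt{p}$; you instead write $\ln\psi(w)-\ln\psi(x)=\int_x^w(\ln\psi)'(t)\,{\rm d}t$, dominate $(\ln\psi)'\leq C_1 g_2$, and telescope the integral into a single $\ln^{(2)}$-increment, which you control by the crude bound $k+w\leq\sqrt{p}\,\psi(x)(k+x)$ together with $\ln\psi(x)\leq C_1\ln\ln(k+x)$ and the monotonicity of $u\mapsto(\tfrac12\ln p+C_1\ln u)/u$. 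The paper's route yields the sharp multiplicative identity $\delta^{(n-1)/2+\lambda}=\sqrt{p}$ with no slack, while yours shows the increment actually tends to $0$ as $k\to\infty$ (strictly more than needed), at the cost of a slightly larger threshold $k_{n,\lambda,p}$; yours also handles $n\geq 2$, $\lambda=0$ directly through $\phi>0$, whereas the paper disposes of that case by the notational identity $\mathcal{I\!L}_{m,k}^{0}=\mathcal{I\!L}_{m-1,k}^{1/2}$ and then assumes $\lambda>0$. Both treatments of the degenerate case $n=1$, $\lambda=0$ (Young's inequality $2xy\leq x^2+y^2\leq px^2+y^2$) agree.
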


\begin{proof}
Note that for each $m\geq 2$, the case of $n=m$ and $\lambda=0$ is just the case of $n=m-1$ and $\lambda=1/2$, and the case of $n=1$ and $\lambda=0$ is evident from the basic ineuqality. It suffices to prove the case of $n\geq 1$ and $\lambda>0$. Fix a sufficiently large $k\geq e^{(n)}$ and let
$$
\psi(x):=\Lnk(x)\geq 1,\ \ x\in [0,+\infty).
$$
We prove that for a sufficient large $k$, the function $\psi(\cdot)$ satisfies \eqref{eq:3-1}-\eqref{eq:3-3}. Observe that for each $n\geq 1$ and $x\geq 0$, we have\vspace{0.1cm}
$$
(\ln^{(n)}(k+x))'=\frac{(\ln^{(n-1)}(k+x))'}{\ln^{(n-1)}(k+x)}=\cdots
=\frac{1}{(k+x)\prod\limits_{j=1}^{n-1} \ln^{(j)}(k+x)}.
$$
For each $n\geq 1$ and $x\geq 0$, we can calculate that
$$
\ln \psi(x)={1\over 2}\sum\limits_{i=1}^{n-1}\ln^{(i+1)}(k+x)+\lambda \ln^{(n+1)}(k+x)
$$
and, in view of $k$ being large enough,
\begin{equation}\label{eq:3-13}
\begin{array}{lll}
\Dis 0\leq x\left(\ln \psi(x)\right)'&=& \Dis \frac{x}{k+x}\left\{{1\over 2}\sum\limits_{i=1}^{n-1}\left(\frac{1}{\prod\limits_{j=1}^{i} \ln^{(j)}(k+x)}\right)
+\frac{\lambda}{\prod\limits_{j=1}^{n} \ln^{(j)}(k+x)}\right\}\vspace{0.2cm}\\
&\leq & \Dis \frac{{n-1\over 2}+\lambda}{\ln k}\leq {1\over 4}\wedge (1-\frac{1}{\sqrt{p}}),\vspace{0.1cm}
\end{array}
\end{equation}
which means that the function $\psi(\cdot)$ satisfies \eqref{eq:3-1}.

Furthermore, it is not very difficult to verify that for each $n\geq 1$ and $x\geq 0$,
$$
\psi'(x)=(\ln \psi(x))'\psi(x)=\frac{\left[\sum\limits_{i=1}^{n-1}\left(\prod
\limits_{j=i+1}^{n-1}\ln^{(j)}(k+x)\right)+\frac{2\lambda}{\ln^{(n)}(k+x)}\right]
\psi(x)}{2(k+x)\prod\limits_{j=1}^{n-1} \ln^{(j)}(k+x)}>0
$$
and
$$
\ln \psi'(x)=\ln\psi_0(x)+\ln \psi(x)-\ln 2 -\sum\limits_{i=0}^{n-1} \ln^{(i+1)}(k+x)
$$
with
\begin{equation}\label{eq:3-14}
\psi_0(x):=\sum\limits_{i=1}^{n-1}\left(\prod\limits_{j=i+1}^{n-1} \ln^{(j)}(k+x)\right)+\frac{2\lambda}{\ln^{(n)}(k+x)}\geq \frac{2\lambda}{\ln(k+x)}{\bf 1}_{n=1}+{\bf 1}_{n\geq 2}.\vspace{0.1cm}
\end{equation}
Note that for each $n\geq 1$ and $x\geq 0$, we have with $i=1, \cdots, n-1$,
$$
\left(\prod\limits_{j=i+1}^{n-1}\ln^{(j)}(k+x)\right)'
=\left(\sum\limits_{j=i+1}^{n-1}\ln^{(j+1)}(k+x)\right)'
\prod\limits_{l=i+1}^{n-1}\ln^{(l)}(k+x)
=\frac{
\sum\limits_{j=i+1}^{n-1}\left(\prod\limits_{l=j+1}^{n-1} \ln^{(l)}(k+x)\right)}
{(k+x)\prod\limits_{l=1}^{i}\ln^{(l)}(k+x)}\geq 0,
$$
and then
$$
\psi_0'(x)=\sum\limits_{i=1}^{n-1}\left\{\frac{
\sum\limits_{j=i+1}^{n-1}\left(\prod\limits_{l=j+1}^{n-1} \ln^{(l)}(k+x)\right)}
{(k+x)\prod\limits_{l=1}^{i}\ln^{(l)}(k+x)}\right\}-\frac{2\lambda}{(k+x)
\prod\limits_{i=1}^{n} \ln^{(i)}(k+x)\ln^{(n)}(k+x)}.
$$
From~\eqref{eq:3-14} and \eqref{eq:3-13},  we deduce that for a sufficiently large $k$,
$$
\begin{array}{lll}
-\left(\ln \psi'(x)\right)'&=& \Dis -\frac{\psi'_0(x)}{\psi_0(x)}-\left(\ln \psi(x)\right)'+{1\over k+x}\sum\limits_{i=0}^{n-1}\left(\frac{1}{\prod\limits_{j=1}^{i} \ln^{(j)}(k+x)}\right)\vspace{0.1cm}\\
&\leq & \Dis 0+{1\over k+x}\left(\frac{1}{\ln (k+x)}+\frac{2\lambda}{\ln (k+x)}\right)+0+{1\over k+x}\left(1+\frac{n-1}{\ln (k+x)}\right)\vspace{0.4cm}\\
&\leq & \Dis  {1\over x}\left(1+\frac{n+2\lambda}{\ln k}\right)\leq  {3\over 2x},\ \ x>0,
\end{array}
$$
which yields that the function $\psi(\cdot)$ satisfies \eqref{eq:3-2}.

In the sequel, we prove that the function $\psi(\cdot)$ also satisfies \eqref{eq:3-3}. In fact, fix $n\geq 2$ and $\lambda>0$, and set
$\delta:=p^{\frac{1}{n-1+2\lambda}}>1$. We pick $k$ large enough such that for each $x\geq 0$ and $i=1,\cdots,n$,
\begin{equation}\label{eq:3-15}
k+\sqrt{p} x\psi(x)\leq \sqrt{p}(k+x)\psi(x)\leq (k+x)^\delta\ \ {\rm and}\ \ \delta\ln^{(i)}(k+x)\leq (\ln^{(i)}(k+x))^\delta.
\end{equation}
Then, for each $i=1,\cdots,n$, we have
\begin{equation}\label{eq:3-16}
\ln^{(i)}(k+\sqrt{p} x\psi(x))\leq \delta\ln^{(i)}(k+x),\ \ x\geq 0.
\end{equation}
The last inequality can be proved by induction. In fact, \eqref{eq:3-16} is clear for $i=1$ since it follows from \eqref{eq:3-15} that
$$
\ln^{(1)}(k+\sqrt{p} x\psi(x))\leq \ln^{(1)}[(k+x)^\delta]=\delta\ln^{(1)}(k+x),\ \ x\geq 0.
$$
Now, assume that \eqref{eq:3-16} holds for some $i=l$ with $l\in \{1,\cdots,n-1\}$. Then, in view of \eqref{eq:3-16} and \eqref{eq:3-15} we can deduce that
$$
\begin{array}{lll}
\Dis \ln^{(l+1)}(k+\sqrt{p} x\psi(x))&=& \Dis \ln^{(1)}[\ln^{(l)}(k+\sqrt{p}x\psi(x))]\leq \ln^{(1)}[\delta\ln^{(l)}(k+x)]\leq \ln^{(1)}[(\ln^{(l)}(k+x))^\delta]\\
&=& \Dis \delta\ln^{(1)}[\ln^{(l)}(k+x)]=\delta \ln^{(l+1)}(k+x),\ \ x\geq 0,
\end{array}
$$
which means that \eqref{eq:3-16} also holds for $i=l+1$. Hence, \eqref{eq:3-16} is true for each $i=1,\cdots,n$, and then
$$
\begin{array}{lll}
\Dis \psi(\sqrt{p}x\psi(x))&=&\Dis \prod\limits_{i=1}^{n-1}\sqrt{\ln^{(i)}(k+\sqrt{p}x\psi(x))} \left(\ln^{(n)}(k+\sqrt{p}x\psi(x))\right)^\lambda\\
&\leq & \Dis \delta^{\frac{n-1}{2}+\lambda}\prod\limits_{i=1}^{n-1}\sqrt{\ln^{(i)}(k+x)} \left(\ln^{(n)}(k+x)\right)^\lambda=\sqrt{p}\psi(x),\ \ x\geq 0.
\end{array}
$$
Then, \eqref{eq:3-3} is true for $\psi(\cdot)$. Up to now, we have proved that the function $\psi(\cdot)$ satisfies all conditions in \cref{pro:3.1}, by which the desired inequality \eqref{eq:3-12} follows immediately.
\end{proof}

\begin{rmk}\label{rmk:3.3}
With respect to the above \cref{pro:3.2}, we make the following remarks.
\begin{itemize}
\item [(i)]  The case of $n=1$ and $\lambda\in [0,1]$ of \cref{pro:3.2} is Proposition 3.2 of \cite{FanHuTang2023SCL}.

\item [(ii)] Except from the case of $n=1$ and $\lambda=0$, the inequality \eqref{eq:3-12} does not hold when $p\leq 1$. In fact, let $n\geq 1$, $\lambda>0$ and $k\geq e^{(n)}$ be any constant. Assume that constants $x,y>0$ satisfy
    $$
    y:=\frac{x}{\Lnk(y)}<x.
    $$
Then, in view of \eqref{eq:3-5},
$$
y^2-\frac{2xy}{\Lnk(y)}+\frac{x^2}{\left(\Lnk(x)\right)^2}= \frac{x^2}{\left(\Lnk(x)\right)^2}-\frac{x^2}{\left(\Lnk(y)\right)^2}<0,
$$
 which  immediately  yields the desired assertion. \vspace{0.2cm}
\end{itemize}
\end{rmk}

Now, let $n\geq 2$, $\lambda>1/2$ and $k\geq e^{(n)}$ be sufficiently large and depends only on $(n,\lambda)$ such that the inequality \eqref{eq:3-12} with $p=2$ in \cref{pro:3.2} holds.  A $C^{1,2}$ function $\phi:[0,T]\times [0,+\infty)\to (0,+\infty)$ will be called a test function,  if $\phi_s>0, \phi_x>0, \phi_{xx}>0$, and
\begin{equation}\label{eq:3-17}
\begin{array}{l}
\Dis -\beta\phi_x(s,x)x -\phi_x(s,x)\frac{\gamma |z|}{\Lnk(|z|)}+{1\over 2}\phi_{xx}(s,x)|z|^2+\phi_s(s,x)\geq 0,\vspace{0.2cm}\\
\hspace*{2.5cm} \Dis (s,x,z)\in [0,T]\times [0,+\infty)\times\R^{1\times d}.
\end{array}
\end{equation}
Here and hereafter, $\phi_s$ is the first-order partial derivative of $\phi$ in the first variable, and $\phi_x$ and $\phi_{xx}$ are respectively the first- and second-order partial derivative of $\phi$ in the second variable. Since (in view of \eqref{eq:3-12} with $p=2$)
$$
\begin{array}{lll}
\Dis -\phi_x(s,x)\frac{\gamma |z|}{\Lnk(|z|)}+{1\over 2}\phi_{xx}(s,x)|z|^2
&=& \Dis \phi_{xx}(s,x)\left( -{\gamma\phi_x(s,x)\over \phi_{xx}(s,x)}\frac{|z|}{\Lnk(|z|)}+{1\over 2}|z|^2\right)\vspace{0.2cm}\\
&\geq&  \Dis -\frac{\gamma^2\phi^2_x(s,x)}{\phi_{xx}(s,x)\left(\Lnk\left(\frac{\gamma \phi_x(s,x)}{\phi_{xx}(s,x)}\right)\right)^2},
\end{array}
$$
a $C^{1,2}$ function $\phi(\cdot,\cdot)$ will be a test function if $\phi>0, \phi_s>0, \phi_x>0, \phi_{xx}>0$, and  moreover, for each $(s,x)\in [0,T]\times [0,+\infty)$,
\begin{equation}\label{eq:3-18}
 -\beta\phi_x(s,x)x-\frac{\gamma^2\phi^2_x(s,x)} {\phi_{xx}(s,x)
\left(\Lnk\left(\frac{\gamma \phi_x(s,x)}{\phi_{xx}(s,x)}\right)\right)^2}+\phi_s(s,x)\geq 0.
\end{equation}

In the sequel, we choose the following function\vspace{-0.1cm}
$$
\phi(s,x):= \, (k+x)\left[1-\left(\ln^{(n)}(k+x)\right)^{1-2\lambda}\right] \mu_s, \ \ (s,x)\in [0,T]\times [0,+\infty)
$$
to explicitly solve \eqref{eq:3-18}, where $\mu_s:[0,T]\to (0,+\infty)$ is a nondecreasing and continuously differentiable function to be assigned. First of all, letting further the constant $k\geq e^{(n)}$ depending only on $(n,\lambda,\gamma)$ be large enough, by a simple computation we can obtain that for each $(s,x)\in [0,T]\times [0,+\infty)$,
$$
\phi_x(s,x)=\left[1-\frac{1}{\left(\ln^{(n)}(k+x)\right)^{2\lambda-1}}
\left(1-\frac{2\lambda-1}{\prod\limits_{i=1}^n\ln^{(i)}(k+x)}\right)\right]\mu_s>0,
$$
$$
\phi_{xx}(s,x)= \frac{2\lambda-1}{(k+x)\left(\Lnk(x)\right)^2}
\left(1-\frac{2\lambda-1}{\prod\limits_{i=1}^n\ln^{(i)}(k+x)}-
\frac{1}{\left(\prod\limits_{i=1}^{n}\ln^{(i)}(k+x)\right)^2}\right)\mu_s>0
$$
and
$$
\phi_s(s,x)= (k+x)\left[1-\left(\ln^{(n)}(k+x)\right)^{1-2\lambda}\right]
\mu'_s>0,\vspace{0.2cm}
$$
which yields that for $(s,x)\in [0,T]\times [0,+\infty)$,
\begin{equation}\label{eq:3.19}
{1\over 2}\mu_s\leq \phi_x(s,x)\leq \mu_s,
\end{equation}
\begin{equation}\label{eq:3.20}
\frac{(2\lambda-1)\mu_s}{2(k+x)\left(\Lnk(x)\right)^2}\leq \phi_{xx}(s,x)\leq \frac{(2\lambda-1)\mu_s}{(k+x)\left(\Lnk(x)\right)^2},
\end{equation}
\begin{equation}\label{eq:3.21}
\phi_s(s,x)\geq {1\over 2} (k+x)\mu'_s,
\end{equation}
and then
\begin{equation}\label{eq:3.22}
\frac{\gamma\phi_x(s,x)}{\phi_{xx}(s,x)}\geq {\gamma\over 2(2\lambda-1)}(k+x)\left(\Lnk(x)\right)^2\geq k+x.\vspace{0.1cm}
\end{equation}
Substituting \eqref{eq:3.19}-\eqref{eq:3.22} into the left hand side of \eqref{eq:3-18},  we have\vspace{0.1cm}
$$
\begin{array}{l}
\Dis -\beta\phi_x(s,x)x-\frac{\gamma^2\phi^2_x(s,x)}{\phi_{xx}(s,x)
\left(\Lnk\left(\frac{\gamma \phi_x(s,x)}{\phi_{xx}(s,x)}\right)\right)^2}
+\phi_s(s,x)
\vspace{0.2cm}\\
\Dis \ \ \geq -\beta(k+x)\mu_s-\frac{\gamma^2 \mu_s^2}{\frac{(2\lambda-1)\mu_s}
{2(k+x)\left(\Lnk(x)\right)^2}\left(\Lnk(k+x)\right)^2}+{1\over 2} (k+x)\mu'_s \vspace{0.3cm}\\
\Dis \ \ \geq(k+x)\left[-\left(\beta+{2\gamma^2\over 2\lambda-1}\right)\mu_s+{1\over 2}\mu'_s\right], \ \ (s,x)\in [0,T]\times [0,+\infty).\vspace{0.2cm}
\end{array}
$$
Thus, if we take
$$
\mu_s:=\exp\left[2\left(\beta+{2\gamma^2\over 2\lambda-1}\right)s\right],\ \ s\in [0,T],
$$
then \eqref{eq:3-18} and then \eqref{eq:3-17} holds. \vspace{0.3cm}

Define the function for $k\geq e^{(n)}$, \vspace{0.1cm}
\begin{equation}\label{eq:3-23}
\varphi(s,x):=(k+x)\left[1-\left(\ln^{(n)}(k+x)
	\right)^{1-2\lambda}\right]
\exp\left[2\left(\beta+{2\gamma^2\over 2\lambda-1}\right)s\right],\ (s,x)\in [0,T]\times [0,+\infty).\vspace{0.1cm}
\end{equation}
We have established the following proposition on the test function $\phi$.
\begin{pro}\label{pro:3.4}
Let $n\geq 2$, $\lambda>1/2$ and $k\geq e^{(n)}$ be a sufficiently large constant depending only on $(n,\lambda,\gamma)$ and such that the inequalities~\eqref{eq:3-12} (with $p=2$)  and \eqref{eq:3.19}-\eqref{eq:3.22} are all satisfied.
Then,  we have  for each $(s,x,z)\in [0,T]\times [0,+\infty)\times\R^{1\times d}$,\vspace{0.1cm}
\begin{equation}\label{eq:3-24}
\Dis -\beta\varphi_x(s,x)x -\varphi_x(s,x)\frac{\gamma |z|}{\Lnk(|z|)}+{1\over 2}\varphi_{xx}(s,x)|z|^2+\varphi_s(s,x)\geq 0.\vspace{0.3cm}
\end{equation}
\end{pro}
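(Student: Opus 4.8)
The plan is to recognize that the target inequality \eqref{eq:3-24} is exactly \eqref{eq:3-17} with the generic test function $\phi$ replaced by the explicit $\varphi$ of \eqref{eq:3-23}, so that the whole task reduces to checking that $\varphi$ genuinely is a test function in the sense introduced above. First I would invoke the reduction already prepared in the text: by \cref{pro:3.2} applied with $p=2$ (legitimate since $k$ has been chosen large enough for \eqref{eq:3-12} to hold), for any $C^{1,2}$ function with $\varphi_{xx}>0$ one factors out $\varphi_{xx}(s,x)$ from the two $z$-dependent terms and applies \eqref{eq:3-12} under the substitution $x\mapsto \gamma\varphi_x(s,x)/\varphi_{xx}(s,x)$, $y\mapsto|z|$, bounding them below by $-\gamma^2\varphi_x^2/\bigl(\varphi_{xx}(\Lnk(\gamma\varphi_x/\varphi_{xx}))^2\bigr)$. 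Hence \eqref{eq:3-24} follows once I verify the $z$-free inequality \eqref{eq:3-18} for $\varphi$, together with the sign conditions $\varphi>0$, $\varphi_s>0$, $\varphi_x>0$, $\varphi_{xx}>0$.

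Next I would differentiate $\varphi(s,x)=(k+x)[1-(\ln^{(n)}(k+x))^{1-2\lambda}]\mu_s$ through the nested logarithms to obtain the explicit $\varphi_x$, $\varphi_{xx}$ and $\varphi_s$ displayed before the proposition. The decisive structural point is that $\lambda>1/2$, so $2\lambda-1>0$ is the dominant factor in $\varphi_{xx}$; choosing $k$ large enough, depending only on $(n,\lambda,\gamma)$, then makes all three derivatives strictly positive and yields the two-sided bounds \eqref{eq:3.19}, \eqref{eq:3.20} and \eqref{eq:3.21}. Combining \eqref{eq:3.19} with \eqref{eq:3.20} gives the coupling estimate \eqref{eq:3.22}, namely $\gamma\varphi_x/\varphi_{xx}\geq k+x$ (here the threshold on $k$ must be taken large enough, in terms of $\gamma$, that $\gamma(\Lnk(x))^2/(2(2\lambda-1))\geq 1$); this is the inequality that tames the logarithmic correction.

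Finally I would substitute \eqref{eq:3.19}--\eqref{eq:3.22} into the left-hand side of \eqref{eq:3-18}. The single fact I exploit is the monotonicity of $\Lnk(\cdot)$: from $\gamma\varphi_x/\varphi_{xx}\geq k+x$ one gets $\Lnk(\gamma\varphi_x/\varphi_{xx})\geq\Lnk(k+x)\geq\Lnk(x)$, so the ratio $(\Lnk(x))^2/(\Lnk(\gamma\varphi_x/\varphi_{xx}))^2\leq 1$ and the troublesome middle term is dominated by $2\gamma^2(k+x)\mu_s/(2\lambda-1)$. The left-hand side of \eqref{eq:3-18} is thereby bounded below by $(k+x)\bigl[-(\beta+2\gamma^2/(2\lambda-1))\mu_s+\tfrac12\mu_s'\bigr]$, and the explicit choice $\mu_s=\exp[2(\beta+2\gamma^2/(2\lambda-1))s]$ makes the bracket vanish identically, so the expression is $\geq 0$, as claimed. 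I expect the main obstacle to lie not in this closing algebra but in the preparatory estimates of the second step: one must exhibit a single threshold $k$, depending only on $(n,\lambda,\gamma)$, that simultaneously secures $\varphi_{xx}>0$, the applicability of \cref{pro:3.2} at $p=2$, and all four bounds \eqref{eq:3.19}--\eqref{eq:3.22}, which requires controlling every nested-logarithm factor uniformly in $x\geq 0$.
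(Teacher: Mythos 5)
Your proposal is correct and follows essentially the same route as the paper's own argument: the reduction via \cref{pro:3.2} with $p=2$ to the $z$-free inequality \eqref{eq:3-18}, the derivative bounds \eqref{eq:3.19}--\eqref{eq:3.22} secured by one sufficiently large $k$ depending only on $(n,\lambda,\gamma)$, the monotonicity of $\Lnk$ to control the middle term by $2\gamma^2(k+x)\mu_s/(2\lambda-1)$, and the choice $\mu_s=\exp\left[2\left(\beta+{2\gamma^2\over 2\lambda-1}\right)s\right]$ that makes the final bracket vanish. Nothing is missing.
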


The following \cref{pro:3.5} gives an a priori estimate for the solution to a BSDE.

\begin{pro}\label{pro:3.5}
Assume that $n\geq 2$, $\lambda>1/2$, the generator $g$ satisfies assumption ${\rm (H2)}_n^{\lambda}$, and $(Y_t,Z_t)_{t\in\T}$ is a solution of BSDE$(\xi,g)$. If the process $(|Y_t|+\int_0^t f_s {\rm d}s)_{t\in\T}$ is of  class (D), then there exists a constant $C>0$ depending only on $(n,\beta,\gamma,\lambda,T)$ such that
\begin{equation}\label{eq:3-25}
|Y_t|\leq |Y_t|+\int_0^t f_s {\rm d}s\leq C\E\left[\left.|\xi|+\int_0^T f_t{\rm d}t\right|\F_t\right]+C,\ \ t\in\T.
\end{equation}
\end{pro}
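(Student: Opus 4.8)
The plan is to apply It\^o's formula, in its Tanaka form, to the process $\varphi(t,|Y_t|)$, where $\varphi$ is the test function from \eqref{eq:3-23} supplied by \cref{pro:3.4}, and to read off from \eqref{eq:3-24} that this process, corrected by a multiple of $\int_0^t f_s\,{\rm d}s$, is a local submartingale; the class (D) hypothesis then promotes it to a genuine submartingale, from which \eqref{eq:3-25} follows by the two-sided comparison $\varphi(t,x)\asymp k+x$.

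First I would write ${\rm d}Y_t=-g(t,Y_t,Z_t)\,{\rm d}t+Z_t\,{\rm d}B_t$ from \eqref{eq:1} and apply Tanaka's formula, ${\rm d}|Y_t|={\rm sgn}(Y_t)\left(-g(t,Y_t,Z_t)\,{\rm d}t+Z_t\,{\rm d}B_t\right)+{\rm d}L_t^0$, with $L^0$ the local time of $Y$ at $0$. Since $\varphi(t,\cdot)\in C^2([0,+\infty))$, It\^o's formula applied to $\varphi(t,|Y_t|)$ yields the drift
$$
\varphi_s(t,|Y_t|)-\varphi_x(t,|Y_t|)\,{\rm sgn}(Y_t)g(t,Y_t,Z_t)+\frac12\varphi_{xx}(t,|Y_t|)|Z_t|^2,
$$
a nonnegative local-time contribution $\varphi_x(t,0)\,{\rm d}L_t^0$ (here $\varphi_x>0$ is crucial), and a continuous local-martingale part $\varphi_x(t,|Y_t|){\rm sgn}(Y_t)Z_t\,{\rm d}B_t$. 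Using ${\rm (H2)}_n^{\lambda}$ in the form ${\rm sgn}(Y_t)g(t,Y_t,Z_t)\le f_t+\beta|Y_t|+\gamma|Z_t|/\Ln(|Z_t|)$ together with $\varphi_x>0$, and replacing $e^{(n)}$ by $k$ via (iv) of \cref{rmk:2.1} (which only enlarges $\gamma$ by a factor depending on $(n,\lambda)$, for which the test function of \cref{pro:3.4} is then built), the drift is bounded below by
$$
-\varphi_x(t,|Y_t|)f_t-\beta\varphi_x(t,|Y_t|)|Y_t|-\varphi_x(t,|Y_t|)\frac{\gamma|Z_t|}{\Lnk(|Z_t|)}+\frac12\varphi_{xx}(t,|Y_t|)|Z_t|^2.
$$
By \eqref{eq:3-24} the last three terms together with $\varphi_s$ are nonnegative, so the drift is $\ge-\varphi_x(t,|Y_t|)f_t\ge-C_0 f_t$, where $C_0:=\exp\left[2\left(\beta+2\gamma^2/(2\lambda-1)\right)T\right]\ge\varphi_x$ by \eqref{eq:3.19}. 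Hence $M_t:=\varphi(t,|Y_t|)+C_0\int_0^t f_s\,{\rm d}s$ is a continuous local submartingale, $M_t=M_0+A_t+N_t$ with $A$ nondecreasing and $N$ a continuous local martingale.

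Next I would upgrade $M$ to a true submartingale. From $\varphi(t,x)\le C_0(k+x)$ (as $\mu_t\le C_0$ and $1-(\ln^{(n)}(k+x))^{1-2\lambda}\le1$) one gets $0\le M_t\le C_0\left(k+|Y_t|+\int_0^t f_s\,{\rm d}s\right)$, so $\{M_\tau:\tau\in\Sigma_T\}$ is dominated by a uniformly integrable family, namely the class (D) process $|Y_\cdot|+\int_0^\cdot f_s\,{\rm d}s$ plus the constant $k$; thus $M$ is of class (D). Localising $N$ along $\tau_m\uparrow T$ gives $\E[A_{\tau_m}]=\E[M_{\tau_m}]-M_0\le\sup_{\tau}\E[M_\tau]<+\infty$, whence $A_T\in L^1$ by monotone convergence; then $|N_\tau|\le|M_\tau|+M_0+A_T$ shows $N$ is of class (D), hence a uniformly integrable martingale. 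Consequently $M$ is a submartingale and $M_t\le\E[M_T\mid\F_t]$ for each $t\in\T$. Using $Y_T=\xi$, this reads $\varphi(t,|Y_t|)+C_0\int_0^t f_s\,{\rm d}s\le\E\left[\varphi(T,|\xi|)+C_0\int_0^T f_s\,{\rm d}s\mid\F_t\right]$. Finally, the lower bound $\varphi(t,x)\ge c_*(k+x)$ with $c_*:=1-(\ln^{(n)}(k))^{1-2\lambda}>0$ (the minimal value of the bracket, attained at $x=0$) and the upper bound $\varphi(T,|\xi|)\le C_0(k+|\xi|)$ turn this into $c_*(k+|Y_t|)+C_0\int_0^t f_s\,{\rm d}s\le C_0 k+C_0\E\left[|\xi|+\int_0^T f_s\,{\rm d}s\mid\F_t\right]$, and dividing by $c_*$ yields \eqref{eq:3-25} with a constant $C$ depending only on $(n,\beta,\gamma,\lambda,T)$; the inequality $|Y_t|\le|Y_t|+\int_0^t f_s\,{\rm d}s$ is trivial.

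I expect the main obstacle to be the drift estimate of the second paragraph: ensuring that the Tanaka correction interacts favourably with $|Y_t|$ (the local-time term carries the right sign precisely because $\varphi_x>0$), and, above all, matching the growth function $\Ln$ of ${\rm (H2)}_n^{\lambda}$ to the $\Lnk$ for which the test-function inequality \eqref{eq:3-24} was constructed, so that \cref{pro:3.4} applies without loss. The class (D) promotion in the third paragraph is routine once $M$ is dominated by a uniformly integrable family, and the conversion of the submartingale inequality into \eqref{eq:3-25} is a bookkeeping exercise in the linear comparability $\varphi(t,x)\asymp k+x$.
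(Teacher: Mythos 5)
Your proof is correct and follows essentially the same route as the paper: It\^{o}--Tanaka with the test function $\varphi$ of \cref{pro:3.4}, assumption ${\rm (H2)}_n^{\lambda}$ upgraded from $\Ln$ to $\Lnk$ via (iv) of \cref{rmk:2.1}, the class (D) hypothesis to promote the local submartingale to a true one, and the two-sided comparison $\varphi(t,x)\asymp k+x$. The only (harmless) difference is bookkeeping: the paper absorbs the forcing term into the spatial argument by applying $\varphi$ to $\bar Y_t=|Y_t|+\int_0^t f_s\,{\rm d}s$, so that $\varphi(t,\bar Y_t)$ is itself a local submartingale with no compensator, whereas you keep $\varphi(t,|Y_t|)$ and compensate additively with $C_0\int_0^t f_s\,{\rm d}s$; you also write out explicitly the class-(D) promotion step that the paper delegates to Proposition 3.4 of \cite{FanHuTang2023SCL}.
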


\begin{proof}
In view of (iv) of \cref{rmk:2.1}, we suppose that the generator $g$ satisfies
assumptions ${\rm (H2)}_n^{\lambda}$  with $\Ln$ being replaced with  $\Lnk$ for  a sufficiently large constant $k\geq e^{(n)}$ (as given in \cref{pro:3.4}). Define
$$
\bar Y_t:=|Y_t|+\int_0^t f_s {\rm d}s\ \ \ \
{\rm and}\ \ \ \ \bar Z_t:={\rm sgn}(Y_t)Z_t,\ \ \ \ t\in \T.
\vspace{0.1cm}
$$
Using It\^{o}-Tanaka's formula, we have\vspace{0.1cm}
$$
\bar Y_t=\bar Y_T+\int_t^T \left({\rm sgn}(Y_s)g(s,Y_s,Z_s)-f_s\right){\rm d}s-\int_t^T \bar Z_s {\rm d}B_s-\int_t^T {\rm d}L_s, \ \ \ t\in\T,
$$
where $L$ is the local time of $Y$ at $0$. Applying It\^{o}-Tanaka's formula with the test function $\varphi$  (see~\eqref{eq:3-23}  for the definition), and noting the assumption ${\rm (H2)}_n^{\lambda}$ (but with $\Ln$ being replaced with  $\Lnk$), we have
$$
\begin{array}{lll}
\Dis {\rm d}\varphi(s,\bar Y_s)
&=&\Dis \varphi_x(s,\bar Y_s)
\left(-{\rm sgn}(Y_s)g(s,Y_s,Z_s)+f_s\right){\rm d}s+\varphi_x(s,\bar Y_s)\bar Z_s {\rm d}B_s\vspace{0.1cm}\\
&&\Dis +\varphi_x(s,\bar Y_s){\rm d}L_s+{1\over 2}\varphi_{xx}(s,\bar Y_s)|Z_s|^2{\rm d}s+\varphi_s(s,\bar Y_s){\rm d}s\vspace{0.2cm}\\
&\geq &\Dis \left[-\varphi_x(s,\bar Y_s)\left(\beta |Y_s|+\frac{\gamma |Z_s|}{\Lnk(|Z_s|)}\right)+{1\over 2}\varphi_{xx}(s,\bar Y_s)|Z_s|^2+\varphi_s(s,\bar Y_s)\right]{\rm d}s\vspace{0.2cm}\\
&& \Dis +\varphi_x(s,\bar Y_s)\bar Z_s {\rm d}B_s, \ \ s\in\T.
\end{array}
$$
Then, since $|Y_s|\leq \bar Y_s$,  we have from \eqref{eq:3-24} in \cref{pro:3.4},
$$
{\rm d}\varphi(s,\bar Y_s)\geq \varphi_x(s,\bar Y_s)\bar Z_s {\rm d}B_s,\ \ s\in \T.
$$
In the sequel, observe from \eqref{eq:3-23} that
$$
{1\over 2}(k+x)\leq \varphi(s,x)\leq k_1(k+x),\ \ (s,x)\in \T\times [0,+\infty),
$$
with
$$
k_1:=\exp\left[2\left(\beta+{2\gamma^2\over 2\lambda-1}\right)T\right].\vspace{0.1cm}
$$
By a similar analysis as that in Proposition 3.4 of \cite{FanHuTang2023SCL},  we conclude that
$$
{1\over 2}(k+\bar Y_t)\leq k_1\E[(k+\bar Y_T)|\F_t],\quad \ t\in \T,
$$
which immediately yields the desired inequality \eqref{eq:3-25}.\vspace{0.2cm}
\end{proof}

We now prove \cref{thm:MainResult} and \cref{pro:2.4}.

\begin{proof}[Proof of \cref{thm:MainResult}]
Assume first that $g$ satisfies assumptions (H1), ${\rm (H2)}_{n}^{\lambda}$ and (H3). With \cref{pro:3.5} in hand, using the localization technique put forward initially in \cite{BriandHu2006PTRF} and following closely the proof of Theorem 2.1 in \cite{FanHuTang2023SCL}, we can construct a solution $(Y_t,Z_t)_{t\in\T}$ of BSDE$(\xi,g)$ such that $(Y,Z)\in \s^p\times\mcal^p$ with each $p\in (0,1)$ and $Y$ is of  class (D). The details are omitted here. Furthermore, suppose that $g$ also satisfies assumptions (H4) and ${\rm (H5)}_{n}^{\lambda}$. The uniqueness part of \cref{thm:MainResult} is a direct consequence of \cref{pro:2.4}, whose proof will be given below. The proof is then complete.\vspace{0.2cm}
\end{proof}

\begin{proof}[Proof of \cref{pro:2.4}]
Let $A$ represent the linear-growth positive constant for the functions $\rho(\cdot)$ and $\kappa(\cdot)$ defined respectively in assumptions (H4) and ${\rm (H5)}_{n}^{\lambda}$. In view of (iv) of \cref{rmk:2.1}, we can suppose that the generator $g$ satisfies ${\rm (H5)}_n^{\lambda}$ with a sufficiently large constant $k\geq e^{(n)}$ instead of $e^{(n)}$, where the constant $k$ depends only on $(n,\lambda, A)$ and can be similarly defined as in \cref{pro:3.4}. In view of assumptions (H4) and ${\rm (H5)}_{n}^{\lambda}$ with $k$ instead of $e^{(n)}$, \cref{pro:3.4} and the proof of \cref{pro:3.5} together with Theorem 2.1 in \cite{Fan2016SPA}, by a similar argument as that in Proposition 2.5 of \cite{FanHuTang2023SCL} we have the desired assertions. The details are omitted here.
\end{proof}

\vspace{0.2cm}




\setlength{\bibsep}{2pt}

\begin{thebibliography}{36}
\expandafter\ifx\csname natexlab\endcsname\relax\def\natexlab#1{#1}\fi
\expandafter\ifx\csname url\endcsname\relax
  \def\url#1{\texttt{#1}}\fi
\expandafter\ifx\csname urlprefix\endcsname\relax\def\urlprefix{URL }\fi

%
%
%
%
%
%
%
%

\bibitem[{Briand et~al.(2003)Briand, Delyon, Hu, Pardoux, and
  Stoica}]{BriandDelyonHu2003SPA}
Briand, P., Delyon, B., Hu, Y., Pardoux, E., Stoica, L., 2003. {$L^p$}
  solutions of backward stochastic differential equations. Stochastic Process.
  Appl. 108~(1), 109--129.


\bibitem[{Briand and Hu(2006)}]{BriandHu2006PTRF}
Briand, P., Hu, Y., 2006. {BSDE} with quadratic growth and unbounded terminal
  value. Probab. Theory Related Fields 136~(4), 604--618.

\bibitem[{Briand and Hu(2008)}]{BriandHu2008PTRF}
Briand, P., Hu, Y., 2008. {Q}uadratic {BSDE}s with convex generators and
  unbounded terminal conditions. Probab. Theory Related Fields 141~(3),
  543--567.

\bibitem[{Buckdahn et~al.(2018)Buckdahn, Hu, and Tang}]{BuckdahnHuTang2018ECP}
Buckdahn, R., Hu, Y., Tang, S., 2018. Uniqueness of solution to scalar {BSDE}s
  with ${L}\exp\left(\mu\sqrt{2\log(1+L)}\right)$-integrable terminal values.
  Electron. Commun. Probab. 23, Paper No. 59, 8pp.

\bibitem[{Delbaen et~al.(2011{\natexlab{a}})Delbaen, Hu, and
  Bao}]{DelbaenHuBao2011PTRF}
Delbaen, F., Hu, Y., Bao, X., 2011{\natexlab{a}}. Backward {SDE}s with
  superquadratic growth. Probab. Theory Related Fields 150~(24), 145--192.
%

\bibitem[{El~Karoui et~al.(1997)El~Karoui, Peng, and
  Quenez}]{ElKarouiPengQuenez1997MF}
El~Karoui, N., Peng, S., Quenez, M.~C., 1997. Backward stochastic differential
  equations in finance. Math. Finance 7~(1), 1--71.

%

\bibitem[{Fan(2016{\natexlab{a}})}]{Fan2016SPA}
Fan, S., 2016{\natexlab{a}}. Bounded solutions, ${L}^{p}\ (p>1)$ solutions and
  ${L}^1$ solutions for one-dimensional {BSDE}s under general assumptions.
  Stochastic Process. Appl. 126, 1511--1552.


\bibitem[{Fan(2018)}]{Fan2018JOTP}
Fan, S., 2018. Existence, uniqueness and stability of ${L}^1$ solutions for multidimensional {BSDE}s with generators of one-sided osgood type. J.
Theoret. Probab. 31, 1860--1899.

\bibitem[{Fan and Hu(2019)}]{FanHu2019ECP}
Fan, S., Hu, Y., 2019. Existence and uniqueness of solution to scalar {BSDE}s with ${L}\exp\left(\mu\sqrt{2\log(1+L)}\right)$-integrable terminal values: the critical case. Electron. Commun. Probab. 24, Paper No. 49, 10pp.

\bibitem[{Fan and Hu(2021)}]{FanHu2021SPA}
Fan S., Hu Y., 2021. Well-posedness of scalar BSDEs with sub-quadratic generators and related PDEs. Stochastic Process. Appl., 131, 21--50.
%
%
\bibitem[{Fan et~al.(2023)Fan, Hu, and Tang}]{FanHuTang2023SPA}
Fan S., Hu Y., Tang S., 2023. Existence, uniqueness and comparison theorem on unbounded solutions of scalar super-linear {BSDE}s. Stochastic Process. Appl. 157,335--375.

\bibitem[{Fan et~al.(2023)Fan, Hu, and Tang}]{FanHuTang2023SCL}
Fan S., Hu Y., Tang S., 2023. Scalar BSDEs with logarithmic sub-linear growth generators and integrable parameters. Syst. Control Lett. 177, 105553.


\bibitem[{Fan and Liu(2010)}]{FanLiu2010SPL}
Fan, S., Liu, D., 2010. A class of BSDE with integrable parameters. Statist. Probab. Lett. 80~(23-24), 2024--2031.


\bibitem[{Hu et~al.(2005)Hu, Imkeller, and M\"{u}ller}]{HuImkeller2005AAP}
Hu, Y., Imkeller, P., M\"{u}ller, M., 2005. Utility maximization in incomplete
markets. Ann. Appl. Probab. 15~(3), 1691--1712.

\bibitem[{Hu and Tang(2015)}]{HuTang2015SPA}
Hu, Y., Tang, S., 2015. Multi-dimensional backward stochastic differential
  equations of diagonally quadratic generators. Stochastic Process. Appl.
  126~(4), 1066--1086.

\bibitem[{Hu and Tang(2018)}]{HuTang2018ECP}
Hu, Y., Tang, S., 2018. Existence of solution to scalar {BSDE}s with
  ${L}\exp\sqrt{{2\over \lambda}\log(1+L)}$-integrable terminal values.
  Electron. Commun. Probab. 23, Paper No. 27, 11pp.

\bibitem[{Kobylanski(2000)}]{Kobylanski2000AP}
Kobylanski, M., 2000. Backward stochastic differential equations and partial
  differential equations with quadratic growth. Ann. Probab. 28~(2), 558--602.
%
\bibitem[{Lepeltier and San~Martin(1997)}]{LepeltierSanMartin1997SPL}
Lepeltier, J.-P., San~Martin, J., 1997. Backward stochastic differential
  equations with continuous coefficient. Statist. Probab. Lett. 32~(4),
  425--430.



\bibitem[{Pardoux and Peng(1990)}]{PardouxPeng1990SCL}
Pardoux, E., Peng, S., 1990. Adapted solution of a backward stochastic
  differential equation. Syst. Control Lett. 14~(1), 55--61.

\bibitem[{Peng(1997)}]{Peng1997Notes}
Peng, S., 1997. Backward SDE and related $g$-expectation. In: El Karoui, N., Mazliak, L. (Eds.), Backward Stochastic Differential Equations. In: Pitman Research
Notes Mathematical Series, vol. 364. Longman, Harlow, pp. 141--159.




\bibitem[{Xiao and Fan(2020)}]{XiaoFan2020KM}
Xiao, L., Fan S., 2020. $L^p\ (p\geq 1)$ solutions of multidimensional BSDEs with time-varying quasi-H\"{o}lder continuity generators in general time intervals. Commun. Korean Math. Soc. 35~(2), 667--684.

\end{thebibliography}

\end{document}